\def\XXint#1#2#3{{\setbox0=\hbox{$#1{#2#3}{\int}$ }
\vcenter{\hbox{$#2#3$ }}\kern-.6\wd0}}
\newcommand{\esssup}{\mathop{\mathrm{ess\,sup}}}
\newtheorem{theorem}{\bf Theorem}[section]
\newtheorem{proposition}[theorem]{\bf Proposition}
\newtheorem{corollary}[theorem]{\bf Corollary}
\theoremstyle{definition}
\newtheorem{definition}[theorem]{Definition}
\newtheorem{remark}[theorem]{Remark}
\numberwithin{equation}{section}
\begin{document}

\title[A priori estimates  for  BSDEs with integrable data]{A priori estimates for  multidimensional BSDEs with integrable data}

\maketitle
\begin{center}
 \normalsize
  TOMASZ KLIMSIAK\footnote{e-mail: {\tt tomas@mat.umk.pl}}\textsuperscript{1,2} \,\,\,
  MAURYCY RZYMOWSKI\footnote{e-mail: {\tt maurycyrzymowski@mat.umk.pl}}\textsuperscript{2}   \par \bigskip
  \textsuperscript{\tiny 1} {\tiny Institute of Mathematics, Polish Academy of Sciences,\\
 \'{S}niadeckich 8,   00-656 Warsaw, Poland} \par \medskip
 
  \textsuperscript{\tiny 2} {\tiny Faculty of
Mathematics and Computer Science, Nicolaus Copernicus University,\\
Chopina 12/18, 87-100 Toru\'n, Poland }\par
\end{center}

\begin{abstract}
We study Backward Stochastic Differential Equations on 
a probability space equipped with a  Brownian filtration.  
We assume that the terminal value and the generator at zero are merely integrable. 
Moreover, the generator is assumed to be non-increasing with respect to the value variable
(with no restrictions on the growth) and
Lipschitz continuous, with sublinear growth, with respect to the control variable.
We provide a priori estimate and stability result  for solutions to the aforementioned BSDEs.
\end{abstract}
\maketitle


\section{Introduction}

Let us fix   a complete probability space $(\Omega,\mathcal F,P)$  and a  $d$-dimensional Brownian motion  $B$ on $(\Omega,\mathcal F,P)$.
Let $\mathbb F$ be the standard augmentation of the filtration generated by $B$. 
We say that a pair $(Y,Z)$, consisting of $\mathbb F$-progressively measurable  processes,  is a solution to a Backward Stochastic Differential 
Equation with data $(\tau,\xi,f)$ (notation: BSDE$^\tau(\xi,f)$), where 
$\tau$ is a bounded $\mathbb F$-stopping time (terminal time),   $\xi$ is an  $\mathcal F_\tau$-measurable random vector (terminal condition),
and  $f:\Omega\times [0,\tau]\times \mathbb R^k\times \mathbb R^{d\times k}\to \mathbb R^k$ is  an $\mathbb F$-progressively measurable process
with respect to the first two variables (generator), if 
\begin{equation}
\label{eqi.1}
Y_t=\xi+\int_t^\tau f(r,Y_r,Z_r)\,dr-\int_t^\tau Z_r\,dB_r,\quad t\in [0,\tau].
\end{equation}
In  \cite{bdh} Briand, Delyon, Hu, Pardoux and Stoica  have proven that under the following weak assumptions on the data 
\begin{enumerate}
\item[(H1)] there is $\lambda\ge0$ such that
$|f(t,y,z)-f(t,y,z')|\le\lambda|z-z'|$ for $t\in[0,\tau]$, $y\in\mathbb{R}^k$, $z,z'\in\mathbb{R}^{d\times k}$,
\item[(H2)] there is $\mu\in\mathbb{R}$ such that
$\langle y-y',f(t,y,z)-f(t,y',z)\rangle\leq\mu|y-y'|^2$ for $t\in[0,\tau]$, $y,y'\in\mathbb{R}^k$, $z\in\mathbb{R}^{d\times k}$,
\item[(H3)] for every $(t,z)\in[0,\tau]\times\mathbb{R}^{d\times k}$ the mapping $\mathbb{R}^k\ni y\rightarrow f(t,y,z)$ is continuous,
\item[(H4)] either $k\ge 2$ and $\mathbb E\Big(\int^\tau_0 \sup_{|y|\le M}|f(r,y,0)|\,dr\Big)<\infty$ for any $M>0$
or $k=1$ and $\int_0^\tau|f(r,y,0)|\,dr<\infty$ for any $y\in\mathbb R$,
\item[(H5)] $\xi\in L^p(\mathcal{F}_\tau)$, $f(\cdot,0,0)\in L^{p}_{\mathbb F}(0,\tau)$,
\item[(Z)] there exists an $\mathbb{F}$-progressively measurable process $g\in L^1_{\mathbb F}(0,\tau)$ and $\gamma\ge 0$, $\kappa\in [0,1)$ such that
\begin{align*}
|f(t,y,z)-f(t,y,0)|\le\gamma(g_t+|y|+|z|)^\kappa,\quad t\in [0,\tau], \,y\in\mathbb{R}^k,\,z\in\mathbb{R}^{d\times k},
\end{align*}
\end{enumerate}
with $p=1$ in (H5), there exists a solution $(Y,Z)$ to \eqref{eqi.1} such that $Y$ is of class (D) and  $Z\in \mathcal H^s_{\mathbb F}(0,\tau),\, s\in (0,1)$.
Moreover, under these assumptions there exists at most one solution $(Y,Z)$ to \eqref{eqi.1} such that $Y$ is of class (D). 
It is  interesting   that, although almost  20 years have passed since the publication of the above theorem, 
surprisingly such  fundamental result  as  a priori estimate for solutions to \eqref{eqi.1} is still missing from the literature 
(besides the special case when $f$ is independent of $z$-variable). 
The aim of the  present paper is to  fill this gap. 

We shall prove (see Theorems \ref{th.1},\ref{th.2}) that for any
$a,b\in (0,1)$  there  exists  a continuous strictly increasing function  $\varphi:\mathbb R^+\to\mathbb R^+$, with $\varphi(0)=0$,
which depends only on $\esssup\tau,\lambda,\mu,\gamma,\kappa,a,b, \|g\|_{L^1_{\mathbb F}}$,
such that 
\begin{equation}
\label{eqi.2}
\begin{split}
\sup_{\sigma\le\tau}\mathbb E|Y_\sigma-\bar Y_\sigma|+\mathbb E\sup_{t\le \tau}|Y_t-\bar Y_t|^a&+\mathbb E\Big(\int_0^\tau|Z_r-\bar Z_r|^2\,dr\Big)^{b/2}\\&\quad
\le  \varphi\big(\mathbb E|\xi-\bar\xi|+\mathbb E\int_0^\tau|f-\bar f|(r,\bar Y_r,\bar Z_r)\,dr\big)
\end{split}
\end{equation}
for any  solutions $(Y,Z), (\bar Y,\bar Z)$ of BSDE$^\tau(\xi, f)$, BSDE$^\tau(\bar\xi,\bar f)$, respectively,
with $Y,\bar Y$ being of class (D) (here $\bar\xi,\bar f$ satisfy the  same assumptions as $\xi,f$).
As a corollary to the above a priori estimate, we get stability result for BSDEs with $L^1$-data.

As far as we know the only paper  concerned with stability results 
for BSDEs with $L^1$-data (we omit in this comment papers with generators independent of $z$-variable) 
is  the paper  by S.J. Fan \cite{Fan}, where the author has proven the following convergence
\begin{equation}
\label{eqi.2fan}
\begin{split}
\sup_{t\le T}\mathbb E|Y_t-Y^n_t|+\mathbb E\Big(\int_0^\tau|Z_r- Z^n_r|^2\,dr\Big)^{b/2}\to 0,
\end{split}
\end{equation}
provided $\mathbb E|\xi-\xi_n|\to 0$ and $|f-f_n|\le \varepsilon_n\searrow 0$.
Here $(Y^n,Z^n)$ is a solution to BSDE$^T(\xi_n,f_n)$, with $(\xi_n,f_n)$ satisfying the same assumptions
as $(\xi,f)$, and $(\varepsilon_n)$ is a decreasing sequence of positive numbers. We see that 
\eqref{eqi.2} readily implies \eqref{eqi.2fan}, and even stronger convergence, without assuming 
boundedness of $|f-f_n|$ (note that S.J. Fan considered even weaker than  (H2) one-sided Osgood condition).   
In \cite{Fan} the author conjectured that in general a priori estimates for BSDEs with $L^1$-data cannot hold
(see the comments in the first paragraph on page 1863 in \cite{Fan}). Our main result  disproves this  conjecture.

From the theoretical and practical  point of view  a priori estimates and stability results describe one of the most
fundamental features of any type of equations.  Here, we would like to mention just about one crucial application
of our results. The result  by Briand, Delyon, Hu, Pardoux and Stoica    allow one, among others, to define, for any fixed  bounded stopping times $\alpha\le\beta$, the following operator 
(so called nonlinear expectation)
\[
\mathbb E^{f}_{\alpha,\beta}:L^1(\mathcal F_\beta)\to L^1(\mathcal F_\alpha),\quad \mathbb E^f_{\alpha,\beta}(\eta):=Y^{\beta,\eta,f}_\alpha,
\] 
where $Y^{\beta,\eta,f}$  is a process of class (D) being  the first component of a solution to BSDE$^\beta(\eta,f)$, with $k=1$.
The concept of {\em nonlinear expectation} has been introduced by Peng \cite{Peng}, originally for $p=2$ and Lipschitz continuous $f$,
and appeared to be a crucial notion in models of mathematical finance and control theory.   
Thanks to the results in \cite{bdh} this notion is also well defined on  $L^1$.
However, to apply this operator  in practice, some basic properties of it   are indispensable.  
One of them is  stability, which is still missing in the literature,  i.e. we ask what one can say about the difference
\[
\mathbb E|\mathbb E^{f}_{\alpha,\beta}(\eta_1)-\mathbb E^{f}_{\alpha,\beta}(\eta_2)|
\]
for $\eta_1,\eta_2\in L^1(\mathcal F_\beta)$.  This leads us to stability results for BSDEs with $L^1$-data.
As a corollary to our main results we have the following inequality
\begin{equation}
\label{eqi.j}
\mathbb E|\mathbb E^{f}_{\alpha,\beta}(\eta_1)-\mathbb E^{f}_{\alpha,\beta}(\eta_2)|\le \varphi(\mathbb E|\eta_1-\eta_2|),
\end{equation}
for any $\eta_1,\eta_2\in L^1(\mathcal F_\beta)$, where $\varphi$ is described in \eqref{eqi.2}.

\section{Backward SDEs with $L^p$-data: the case $p>1$}

For any $n\ge 1$ and  $x\in\mathbb{R}^n$ by $|x|$ we denote the euclidean norm of the vector $x$. 
Let $\beta$ be a bounded stopping time and $p> 0$.
By $\mathcal{S}^p_{\mathbb{F}}(0,\beta)$ we denote the set of all $\mathbb{F}$-progressively measurable $\mathbb{R}^k$-valued processes $Y$
such that $\mathbb E\sup_{0\le t\le\beta}|Y_t|^p<\infty$. We set
\[
||Y||_{\mathcal{S}^p_{\mathbb{F}}(0,\beta)}:=\big(\mathbb E\sup_{0\le t\le\beta}|Y_t|^p\big)^{\frac{1}{p}},\, p>1,\quad 
|Y|_{\mathcal{S}^p_{\mathbb{F}}(0,\beta)}:=\mathbb E\sup_{0\le t\le\beta}|Y_t|^p,\, p\in (0,1).
\]
Let $r,q\ge 1$. By $L^{r,q}_{\mathbb{F}}(0,\beta)$ we denote the set of all $\mathbb{F}$-progressively measurable, 
$\mathbb{R}^k$-valued processes $X$ such that
\[
||X||_{L^{r,q}_{\mathbb{F}}(0,\beta)}:=\Bigg(\mathbb{E}\Big(\int^{\beta}_{0}|X_r|^r\,dr\Big)^{\frac{q}{r}}\Bigg)^{\frac{1}{r}}<\infty.
\]
$L^r_{\mathbb{F}}(0,\beta)$ is the shorthand for $L^{r,r}_{\mathbb{F}}(0,\beta)$.

Let $\mathcal{G}\subset\mathcal{F}$ be a $\sigma$-field. $L^r(\mathcal{G})$ 
denotes the set of all $\mathcal{G}$-measurable random vectors $X$ such that $\mathbb{E}|X|^r<\infty$.
By $\mathcal{H}_{\mathbb{F}}(0,\beta)$ we denote the space of all $\mathbb{F}$-progressively measurable 
$\mathbb{R}^{d\times k}$-valued processes $Z$ such that $P(\int^{\beta}_{0}|Z_r|^2\,dr<\infty)=1$.
$\mathcal{H}^s_{\mathbb{F}}(0,\beta)$, $s>0$, is a subspace of  $\mathcal{H}_{\mathbb{F}}(0,\beta)$
consisting of  $Z$ satisfying $\mathbb{E}\Big(\int^{\beta}_{0}|Z_r|^2\,dr\Big)^{\frac{s}{2}}<\infty.$
We set
\[
||Z||_{\mathcal{H}^s_{\mathbb{F}}(0,\beta)}:=\Bigg(\mathbb{E}\Big(\int^{\beta}_{0}|Z_r|^2\,dr\Big)^{\frac{s}{2}}\Bigg)^{\frac{1}{s}},\, s>1,\quad
|Z|_{\mathcal{H}^s_{\mathbb{F}}(\alpha,\beta)}:=\mathbb{E}\Big(\int^{\beta}_{0}|Z_r|^2\,dr\Big)^{\frac{s}{2}},\, s\in (0,1).
\]
We say that $\mathbb{F}$-progessively measurable process $X$ is of class (D) on $[[0,\beta]]$ 
if the family $\{|X_{\tau}|,\,\tau \text{ is a stopping time, } \tau\le\beta\}$ is uniformly integrable. 
For $p\ge 1$, by $\mathcal{D}^p_{\mathbb{F}}(0,\beta)$ we denote the set of all $\mathbb{F}$-progressively measurable, 
$\mathbb{R}^k$-valued processes $Y$ such that $|Y|^p$ is of class (D) on $[[0,\beta]]$. 
We equip $\mathcal D^p_{\mathbb{F}}(0,\beta)$ with the norm
\[
||Y||_{\mathcal{D}^p(0,\beta)}:=\Big(\sup_{\sigma\le\beta}\mathbb E|Y_{\sigma}|^p\Big)^{\frac{1}{p}},
\]
where the supremum is taken over the set of stopping times $\sigma$.
Throughout the paper, we adopt the convention that any $l$-dimensional random vector $X$, with $l<k$,
is considered as a member of  the  space of $k$-dimensional   random vectors by the inclusion operator 
\[
X=(X^1,\dots X^l)\mapsto (X^1,\dots,X^l,0,\dots,0)\in\mathbb R^k.
\]

Let $\hat{\xi}$ be a $k$-dimensional $\mathcal{F}_{\beta}$-adapted random vector. 

\begin{definition}
We say that a pair $(Y,Z)$ of $\mathbb{F}$-adapted processes is a solution to  backward stochastic differential equation
on the interval $[[0,\beta]]$ with right-hand side $f$ and terminal value $\hat{\xi}$ (BSDE$^{\beta}(\hat{\xi},f)$ for short) if
\begin{enumerate}
\item[(a)] $Y$ is a continuous process  and $Z\in\mathcal{H}_{\mathbb{F}}(0,\beta)$,
\item[(b)] $f(\cdot,Y,Z)$ is $\mathbb F$-progressively measurable and $\int^{\beta}_{0}|f(r,Y_r,Z_r)|\,dr<\infty$,
\item[(c)] $Y_t=\hat{\xi}+\int^{\beta}_t f(r,Y_r,Z_r)\,dr-\int^{\beta}_t Z_r\,dB_r$, $t\in[0,\beta]$.
\end{enumerate}
\end{definition}




\begin{remark}
Let $\beta$ be a bounded stopping time and $T:= \esssup \beta$.
Let $\hat\xi\in L^1(\mathcal F_\beta)$.  Observe that if $(Y^\beta,Z^\beta)$ is a solution to
BSDE$^\beta(\hat \xi,f)$, then $(Y^\beta_{\cdot\wedge \beta},\mathbf1_{[0,\beta]}Z^\beta)$
is a solution to BSDE$^T(\hat\xi,\mathbf 1_{[0,\beta]}f)$. Conversely, if $(Y^T,Z^T)$ is a solution
to BSDE$^T(\hat\xi,\mathbf 1_{[0,\beta]}f)$, then $(Y^T,Z^T)$
is a solution to BSDE$^\beta(\hat\xi,f)$.
\end{remark}

In light of the above remark, we may focus on BSDEs with deterministic terminal time.
In the remainder of the paper, we fix a  number $T>0$.
Let us adopt the shorthands $||\cdot||_{\mathcal{S}^p}:=||\cdot||_{\mathcal{S}^p_{\mathbb{F}}(0,T)}$,
$|\cdot|_{\mathcal{S}^p}:=|\cdot|_{\mathcal{S}^p_{\mathbb{F}}(0,T)}$,
$\|\cdot\|_{L^{r,q}_{\mathbb{F}}}:=\|\cdot\|_{L^{r,q}_{\mathbb{F}}(0,T)}$, $\|\cdot\|_{L^{r}}:=\|\cdot\|_{L^{r}(\mathcal F_T)}$,
$||\cdot||_{\mathcal{H}^p}:=||\cdot||_{\mathcal{H}^p_{\mathbb{F}}(0,T)}$, $|\cdot|_{\mathcal{H}^p}:=|\cdot|_{\mathcal{H}^p_{\mathbb{F}}(0,T)}$,
$||\cdot||_{\mathcal{D}^p}:= ||\cdot||_{\mathcal{D}^p(0,T)}$.

The results presented below have been proven in  \cite{bdh}.

\begin{theorem}\label{5grudnia3}
Let $p>1$. 
\begin{enumerate}
\item[(i)] Assume that \textnormal{(H1)--(H5)} are in force.
Then there exists a  solution $(Y,Z)\in\mathcal{S}^p_{\mathbb{F}}(0,T)\times \mathcal H^p_{\mathbb{F}}(0,T)$ to \textnormal{BSDE}$^T(\xi,f)$. 
\item[(ii)] Assume that \textnormal{(H1),(H2)} are satisfied. Then  there exists at most one solution $(Y,Z)$ to BSDE$^T(\xi,f)$
such that $Y\in\mathcal S^p_{\mathbb F}(0,T)$.
\end{enumerate}

\end{theorem}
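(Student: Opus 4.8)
The plan is to prove the uniqueness claim (ii) first and then construct a solution for (i) by approximation, both parts resting on the same Itô/monotonicity computation. For (ii), let $(Y,Z)$ and $(\bar Y,\bar Z)$ be two solutions with $Y,\bar Y\in\mathcal S^p_{\mathbb F}(0,T)$, and put $\delta Y=Y-\bar Y$, $\delta Z=Z-\bar Z$, so $\delta Y_T=0$. I would apply Itô's formula to $|\delta Y|^2$, localizing along stopping times that reduce the stochastic integral $\int\langle\delta Y_r,\delta Z_r\,dB_r\rangle$ to a true martingale (needed since a priori $\delta Z$ only lies in $\mathcal H_{\mathbb F}(0,T)$). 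Splitting the drift difference $f(r,Y_r,Z_r)-f(r,\bar Y_r,\bar Z_r)$ into a $y$-increment and a $z$-increment and invoking (H2) and (H1) gives $\langle\delta Y_r,f(r,Y_r,Z_r)-f(r,\bar Y_r,\bar Z_r)\rangle\le\mu|\delta Y_r|^2+\lambda|\delta Y_r||\delta Z_r|$. Absorbing the cross term by Young's inequality into the $|\delta Z|^2$ coming from the quadratic variation, then taking expectations and removing the localization by monotone/dominated convergence, yields $\mathbb E|\delta Y_t|^2\le C\int_t^T\mathbb E|\delta Y_r|^2\,dr$; Gronwall forces $\delta Y\equiv0$, whence $\delta Z\equiv0$.

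For (i), I would approximate $f$ by a sequence of generators $f_n$ that are globally Lipschitz in $(y,z)$ while still satisfying (H1) and the monotonicity (H2), e.g. by truncating the $y$-growth of $f$ and regularizing so as to preserve (H1)--(H2), arranged so that $|f_n(\cdot,0,0)|\le|f(\cdot,0,0)|\in L^p_{\mathbb F}(0,T)$ and, by (H3)--(H4), $f_n\to f$ locally uniformly. For each $n$ the classical Pardoux--Peng theory produces a unique solution $(Y^n,Z^n)\in\mathcal S^2_{\mathbb F}(0,T)\times\mathcal H^2_{\mathbb F}(0,T)$ to BSDE$^T(\xi,f_n)$.

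The heart of the argument, and the step I expect to be the main obstacle, is the $n$-uniform a priori bound
\[
\|Y^n\|_{\mathcal S^p}^p+\|Z^n\|_{\mathcal H^p}^p\le C\Big(\mathbb E|\xi|^p+\mathbb E\big(\textstyle\int_0^T|f(r,0,0)|\,dr\big)^p\Big).
\]
I would derive it by applying Itô to $(|Y^n|^2+\varepsilon)^{p/2}$ --- the regularization being essential when $1<p<2$, where $x\mapsto|x|^p$ fails to be $C^2$ at the origin --- bounding $\langle Y^n_r,f_n(r,Y^n_r,Z^n_r)\rangle$ from above via (H2) with $y'=0$ and (H1), absorbing the resulting $\lambda|Y^n||Z^n|$ term by Young into the weighted quadratic-variation term $|Y^n|^{p-2}|Z^n|^2$, and then letting $\varepsilon\to0$. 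After localization, expectation and Gronwall this controls $\sup_t\mathbb E|Y^n_t|^p$ together with $\mathbb E\int_0^T|Y^n_r|^{p-2}\mathbf 1_{\{Y^n_r\neq0\}}|Z^n_r|^2\,dr$; a Burkholder--Davis--Gundy estimate on the martingale term upgrades the first quantity to the full $\mathcal S^p$ bound, and the pointwise inequality $\int_0^T|Z^n|^2\,dr\le(\sup_r|Y^n_r|)^{2-p}\int_0^T|Y^n_r|^{p-2}\mathbf 1_{\{Y^n_r\neq0\}}|Z^n_r|^2\,dr$ followed by Young converts the weighted bound into the $\mathcal H^p$ bound. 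The delicate interplay between the $\varepsilon$-regularization, the absorption of the cross term, and the weight $|Y^n|^{p-2}$ is what makes the $1<p<2$ case technical.

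Finally I would show that $(Y^n,Z^n)$ is Cauchy in $\mathcal S^p_{\mathbb F}(0,T)\times\mathcal H^p_{\mathbb F}(0,T)$: the same computation applied to a difference of two approximations gives a stability estimate bounding $\|Y^n-Y^m\|_{\mathcal S^p}+\|Z^n-Z^m\|_{\mathcal H^p}$ by $\mathbb E\int_0^T|f_n-f_m|(r,Y^m_r,Z^m_r)\,dr$, which tends to $0$ by the uniform bounds, (H3)--(H4) and dominated convergence. Denoting the limit by $(Y,Z)\in\mathcal S^p_{\mathbb F}(0,T)\times\mathcal H^p_{\mathbb F}(0,T)$, it remains to pass to the limit in the generator, i.e. to prove $f_n(\cdot,Y^n,Z^n)\to f(\cdot,Y,Z)$ in $L^1_{\mathbb F}(0,T)$; here I would extract an a.e.-convergent subsequence, use continuity (H3) for pointwise convergence and the local integrability (H4) together with the $\mathcal S^p$ bound for a uniform-integrability argument, and then let $n\to\infty$ in the integral identity (c), obtaining a solution $(Y,Z)\in\mathcal S^p_{\mathbb F}(0,T)\times\mathcal H^p_{\mathbb F}(0,T)$ to BSDE$^T(\xi,f)$.
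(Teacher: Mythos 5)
First, a point of reference: the paper does not prove this theorem at all --- it is quoted from Briand--Delyon--Hu--Pardoux--Stoica \cite{bdh}, so your proposal can only be measured against the argument there. Your existence sketch (approximate, uniform $\mathcal{S}^p\times\mathcal{H}^p$ bound via It\^o applied to a regularized $p$-th power, stability/Cauchy argument, passage to the limit) follows the spirit of \cite{bdh}. But your uniqueness argument (part (ii)) has a genuine gap when $1<p<2$, which is allowed by the hypotheses. You apply It\^o's formula to $|\delta Y|^2$, localize, and then claim to remove the localization ``by monotone/dominated convergence'' to reach $\mathbb{E}|\delta Y_t|^2\le C\int_t^T\mathbb{E}|\delta Y_r|^2\,dr$. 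The hypotheses only give $\delta Y\in\mathcal{S}^p_{\mathbb F}(0,T)$, so when $p<2$ neither $\mathbb{E}|\delta Y_{\tau_n}|^2$ nor $\mathbb{E}\int_0^T|\delta Y_r|^2\,dr$ need be finite, and there is no dominating function. Concretely, with the natural localization $\sigma_m:=\inf\{t:\ |\delta Y_t|\ge m\}\wedge T$ one has $\mathbb{E}|\delta Y_{\sigma_m}|^2=m^2P(\sigma_m<T)$, and Chebyshev's inequality only gives $m^2P(\sigma_m<T)\le m^{2-p}\,\mathbb{E}\sup_t|\delta Y_t|^p$, which blows up as $m\to\infty$ for $p<2$. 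So your Gronwall inequality relates possibly infinite quantities and the localization cannot be removed: the $|x|^2$-based argument genuinely fails, it is not a matter of routine care.

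The repair --- and this is what \cite{bdh} actually does, and what the present paper itself relies on elsewhere (the appeals to \cite[Corollary 2.3]{bdh} in Proposition \ref{3marca1} and Theorem \ref{th.1}) --- is to keep every exponent at $p$: apply the It\^o--Tanaka/Meyer-type formula for $x\mapsto|x|^p$ (which is $C^1$ but not $C^2$ at the origin for $p<2$) directly to $\delta Y$, producing the weighted bracket term $c(p)\int_t^T|\delta Y_r|^{p-2}\mathbf{1}_{\{\delta Y_r\neq0\}}|\delta Z_r|^2\,dr$, absorb the cross term $p\lambda|\delta Y_r|^{p-1}|\delta Z_r|$ into it by Young's inequality, and then run Gronwall on $\mathbb{E}|\delta Y_t|^p$, which is finite precisely because $Y,\bar Y\in\mathcal{S}^p_{\mathbb F}(0,T)$. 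Ironically, this is exactly the regularization mechanism you invoke (correctly) for the a priori estimate in part (i), but abandon in part (ii), where it is equally indispensable. A secondary weak point in part (i): approximating a merely continuous, monotone generator of arbitrary growth in $y$ by \emph{globally Lipschitz} $f_n$ that still satisfy the vector monotonicity (H2) is itself a substantial problem in the multidimensional case $k\ge2$ (componentwise inf-convolution destroys (H2)); \cite{bdh} sidesteps it by truncating the data $\xi$ and $f(\cdot,0,0)$ instead of the generator's $y$-dependence, and invoking the known $L^2$ existence theory for monotone generators before passing to the $L^p$ limit. As written, your construction of the $f_n$ is an unproved claim, though of a fixable, ``cite the right $L^2$ theorem'' kind, unlike the uniqueness step.
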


\begin{proposition}\label{13stycznia19}
Let  $p>1$. Assume that \textnormal{(H1), (H2), (H5)} are satisfied.
Let $(Y,Z)$ be a solution to \textnormal{BSDE}$(\xi,f)$ 
such that $Y\in\mathcal{S}^p_{\mathbb{F}}(0,T)$. 
Then there exists $c_p>0$, depending only on $p$, such that 
\begin{equation}
\label{13stycznia20}
\begin{split}
\mathbb E\Big[\sup_{0\le t\le T}e^{a t}|Y_t|^p+\Big(\int^T_0 e^{2a r} |Z_r|^2\,dr\Big)^{\frac{p}{2}}\Big]\le c_p\mathbb E\Big[e^{apT}|\xi|^p+\Big(\int^T_0 e^{ar}|f(r,0,0)|\,dr\Big)^p\Big],
\end{split}
\end{equation}
for any $a\ge  \mu+\frac{\lambda ^2}{1\wedge (p-1)}$.
\end{proposition}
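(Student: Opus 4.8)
The plan is to run the classical weighted It\^o argument in the monotone setting. First I would distill (H1) and (H2) into a single one-sided structural bound: writing $f(r,y,z)-f(r,0,0)=\big(f(r,y,z)-f(r,0,z)\big)+\big(f(r,0,z)-f(r,0,0)\big)$, testing the first bracket against $y$ via (H2) with $y'=0$ and estimating the second by (H1), one obtains
\begin{equation*}
\langle y,f(r,y,z)\rangle\le |y|\,|f(r,0,0)|+\mu|y|^2+\lambda|y|\,|z|,\qquad (r,y,z)\in[0,T]\times\mathbb R^k\times\mathbb R^{d\times k}.
\end{equation*}
This is the only place where the hypotheses enter; everything afterwards is analysis.

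The core step is to apply It\^o's formula to $t\mapsto e^{apt}|Y_t|^p$, using $dY_r=-f(r,Y_r,Z_r)\,dr+Z_r\,dB_r$. For $p\ge2$ the map $y\mapsto|y|^p$ is $C^2$ and this is direct; for $1<p<2$ it is only $C^1$ at the origin, so I would instead work with the regularization $(|Y_t|^2+\delta)^{p/2}$ and let $\delta\downarrow0$ at the end. The Hessian of $|y|^p$ contributes a nonnegative term bounded below by $c_p\,|Y_r|^{p-2}|Z_r|^2$ with $c_p=\tfrac{p}{2}\big(1\wedge(p-1)\big)$ — the factor $1\wedge(p-1)$ originating exactly from the degeneracy of this Hessian when $p<2$. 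Combining this with the structural bound and the weight gives, after localizing by stopping times to control the local martingale, a schematic inequality
\begin{equation*}
e^{apt}|Y_t|^p+c_p\int_t^T e^{apr}|Y_r|^{p-2}|Z_r|^2\,dr\le e^{apT}|\xi|^p+\int_t^T e^{apr}\big(p|Y_r|^{p-1}|f(r,0,0)|+p(\mu-a)|Y_r|^p+p\lambda|Y_r|^{p-1}|Z_r|\big)\,dr+\mathcal M_t,
\end{equation*}
where $\mathcal M$ is the stochastic integral part.

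Next I would absorb the bad drift terms using the choice of $a$. Young's inequality splits $p\lambda|Y_r|^{p-1}|Z_r|$ into $\tfrac{c_p}{2}|Y_r|^{p-2}|Z_r|^2+\tfrac{p\lambda^2}{2c_p}|Y_r|^p$, and the threshold $a\ge\mu+\lambda^2/(1\wedge(p-1))$ is precisely what makes $p(\mu-a)+\tfrac{p\lambda^2}{2c_p}\le0$, so that both the $|Y|^p$-drift and half of the $|Y|^{p-2}|Z|^2$-term disappear into the left-hand side. The source term is handled by the exact factorization $e^{apr}|Y_r|^{p-1}=(e^{ar}|Y_r|)^{p-1}e^{ar}$, giving $\int_t^T p e^{apr}|Y_r|^{p-1}|f(r,0,0)|\,dr\le p\sup_r(e^{ar}|Y_r|)^{p-1}\int_0^T e^{ar}|f(r,0,0)|\,dr$, to which I apply Young with exponents $\tfrac{p}{p-1}$ and $p$ after taking expectations. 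Taking $\sup_t$ and using the Burkholder--Davis--Gundy inequality on $\mathcal M$ — whose quadratic variation factors as $\sup_r(e^{ar}|Y_r|)^p$ times $\int_0^T e^{apr}|Y_r|^{p-2}|Z_r|^2\,dr$ up to constants — yields, after one more Young splitting and passage to the limit ($\delta\downarrow0$, $\tau_n\uparrow T$) via Fatou, the bound on $\mathbb E\sup_t e^{apt}|Y_t|^p$ together with $\mathbb E\int_0^T e^{apr}|Y_r|^{p-2}|Z_r|^2\,dr$.

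Finally, to recover the genuine $\mathcal H^p$-norm $\mathbb E\big(\int_0^T e^{2ar}|Z_r|^2\,dr\big)^{p/2}$ appearing in \eqref{13stycznia20}, I would apply It\^o to $e^{2at}|Y_t|^2$ (no power, hence no regularization needed), solve for $\int_0^T e^{2ar}|Z_r|^2\,dr$, absorb the $\lambda|Y||Z|$ cross term by Young, raise to the power $p/2$, take expectations, and close with BDG and the already-established $\mathcal S^p$-control of $Y$. The main obstacle is the range $1<p<2$: the non-smoothness of $|y|^p$ at the origin forces the $(|Y|^2+\delta)^{p/2}$ regularization and a careful tracking of the degenerate constant $c_p\sim p-1$, which is exactly what produces the sharp threshold $a\ge\mu+\lambda^2/(1\wedge(p-1))$; the remainder is bookkeeping with Young's inequality, localization, and BDG.
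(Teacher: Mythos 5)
The paper itself gives no proof of Proposition \ref{13stycznia19}: it is imported from \cite{bdh} (``The results presented below have been proven in \cite{bdh}''), and your argument is essentially the proof given there --- the weighted It\^o expansion of $e^{apt}|Y_t|^p$ (your $(|y|^2+\delta)^{p/2}$ regularization playing the role of the It\^o--Tanaka-type formula of \cite[Lemma 2.2]{bdh} for $1<p<2$, with the same degenerate constant $c_p=\tfrac p2(1\wedge(p-1))$), Young absorption at the threshold $a\ge\mu+\lambda^2/(1\wedge(p-1))$, BDG for the local martingale part, and then a second application of It\^o to $e^{2at}|Y_t|^2$ to upgrade the control of $\int_0^T e^{apr}|Y_r|^{p-2}|Z_r|^2\,dr$ to the genuine $\mathcal{H}^p$-norm of $Z$, as in \cite[Lemma 3.1]{bdh}. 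Your proposal is therefore correct and follows essentially the same route; the one slip --- the Young splitting should produce $\tfrac{c_p}{2}|Y_r|^{p-2}|Z_r|^2+\tfrac{p^2\lambda^2}{2c_p}|Y_r|^p$ rather than $\tfrac{p\lambda^2}{2c_p}|Y_r|^p$ --- is immaterial, since with the corrected constant the stated lower bound on $a$ is exactly what makes the absorption close.
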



\section{Backward SDEs with $L^p$-data: the case $p=1$}

\subsection{Preliminary results}

The following result has been proven in \cite{bdh}.

\begin{theorem}\label{12sierpnia1}
Let $p=1$. Assume that \textnormal{(H1)-(H5), (Z)} are in force. 
 Then the following assertions hold.
 \begin{enumerate}
 \item[(i)]  There exists a  solution $(Y,Z)$ of \textnormal{BSDE}$^T(\xi,f)$ such that $Y$ is of class \textnormal{(D)} and $Z\in\mathcal{H}^s_{\mathbb{F}}(0,T)$, $s\in(0,1)$.
 \item[(ii)] There exists at most one solution  $(Y,Z)$ to \textnormal{BSDE}$^T(\xi,f)$ such that $Y$ is of class  \textnormal{(D)}.
 \end{enumerate} 
\end{theorem}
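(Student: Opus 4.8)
The plan is to establish existence and uniqueness separately, in each case reducing the $L^1$ problem to the $L^p$ theory of Theorem \ref{5grudnia3} and exploiting the monotonicity (H2) together with the subcritical growth (Z) to tame the dependence of $f$ on the control variable. For existence I would first regularize the data: let $\pi_n(x):=x\min(1,n/|x|)$ be the radial truncation at level $n$, put $\xi_n:=\pi_n(\xi)$ and $f_n(t,y,z):=f(t,y,z)-f(t,0,0)+\pi_n(f(t,0,0))$. Each $f_n$ inherits (H1)--(H4) and (Z) with the same constants, while now $\xi_n$ and $f_n(\cdot,0,0)=\pi_n(f(\cdot,0,0))$ are bounded, hence in $L^p$ for every $p>1$; Theorem \ref{5grudnia3}(i) therefore supplies solutions $(Y^n,Z^n)\in\mathcal S^p_{\mathbb F}(0,T)\times\mathcal H^p_{\mathbb F}(0,T)$. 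The crux is a pair of a priori bounds uniform in $n$. For $Y^n$ I would apply the It\^o--Tanaka formula to $|Y^n|$ (rigorously, to $(\varepsilon^2+|Y^n|^2)^{1/2}$, the limit $\varepsilon\to0$ only improving the inequality by convexity); splitting $f_n(r,Y^n_r,Z^n_r)$ into $[f_n(r,Y^n_r,Z^n_r)-f_n(r,0,Z^n_r)]+[f_n(r,0,Z^n_r)-f_n(r,0,0)]+f_n(r,0,0)$ and estimating the three pieces by (H2), (Z) and the truncation respectively, then taking conditional expectation and applying a stochastic Gronwall argument, one reaches
\[
|Y^n_t|\le\mathbb E\Big[e^{\mu(T-t)}\Big(|\xi_n|+\int_t^T\big(\gamma(g_r+|Z^n_r|)^\kappa+|f_n(r,0,0)|\big)\,dr\Big)\,\Big|\,\mathcal F_t\Big],
\]
which, via Doob's inequality, controls $\|Y^n\|_{\mathcal D^1}$ once the $Z$-term on the right is absorbed.

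To bound $Z^n$ I would apply It\^o's formula to $|Y^n|^2$, expressing $\int_0^T|Z^n_r|^2\,dr$ through $\sup_t|Y^n_t|$, the term $\sup_t|Y^n_t|\int_0^T|f_n(r,Y^n_r,Z^n_r)|\,dr$, and the martingale $\int_0^\cdot\langle Y^n_r,Z^n_r\,dB_r\rangle$. Raising to the power $s/2$ with $s\in(0,1)$ and combining the Burkholder--Davis--Gundy and Young inequalities, one can absorb the quadratic occurrences of $Z^n$ into the left-hand side precisely because $s<1$ and, crucially, because $\kappa<1$ keeps the growth term subcritical; this furnishes a uniform bound on $\|Z^n\|_{\mathcal H^s_{\mathbb F}}$ for every $s\in(0,1)$. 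The decisive observation is that the two generators $f_n,f_m$ differ only by a process independent of $(y,z)$, so (H1) and (H2) still govern the genuinely nonlinear part of the difference $(Y^n-Y^m,Z^n-Z^m)$; running the same computation for this difference shows that $(Y^n)$ is Cauchy in $\mathcal D^1_{\mathbb F}(0,T)$ and $(Z^n)$ is Cauchy in $\mathcal H^s_{\mathbb F}(0,T)$, since $\mathbb E|\xi_n-\xi_m|\to0$ and $\pi_n(f(\cdot,0,0))\to f(\cdot,0,0)$ in $L^1_{\mathbb F}(0,T)$. Passing to the limit in the equation, using (H3) and the growth bound to identify the limit of $f_n(\cdot,Y^n,Z^n)$ with $f(\cdot,Y,Z)$, yields a solution with $Y$ of class (D) and $Z\in\mathcal H^s_{\mathbb F}(0,T)$.

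For uniqueness, let $(Y,Z),(\bar Y,\bar Z)$ be two solutions with $Y,\bar Y$ of class (D) and set $\delta Y:=Y-\bar Y$, $\delta Z:=Z-\bar Z$. Writing $f(r,Y_r,Z_r)-f(r,\bar Y_r,\bar Z_r)=[f(r,Y_r,Z_r)-f(r,\bar Y_r,Z_r)]+[f(r,\bar Y_r,Z_r)-f(r,\bar Y_r,\bar Z_r)]$ and using (H1), the second bracket can be written as $\langle a_r,\delta Z_r\rangle$ with $|a_r|\le\lambda$ progressively measurable. As $a$ is bounded and $T$ finite, Girsanov's theorem produces an equivalent measure $\tilde P$ under which $d\tilde B_r=dB_r-a_r\,dr$ is a Brownian motion and the difference equation loses its $z$-Lipschitz term. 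Applying the It\^o--Tanaka formula to $|\delta Y|$ under $\tilde P$ and invoking (H2) gives $|\delta Y_t|\le\mu\int_t^T|\delta Y_r|\,dr+(M_t-M_T)$ for a $\tilde P$-local martingale $M$; the class (D) property makes the localized stochastic integral a uniformly integrable martingale, so its conditional expectation vanishes and Gronwall's lemma forces $\delta Y\equiv0$, whence $\delta Z\equiv0$.

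I expect the main obstacle to be the control of $Z$ below the $L^2$ scale. Because the data are merely integrable, one cannot take expectations of $|\xi|^2$ or of $\int_0^T|Z|^2\,dr$, and the entire argument must be carried out at the level of the $\mathcal H^s$, $s<1$, quasinorms, where the loss of an inner-product structure forces one to rely on Young's inequality to absorb the quadratic variation. Making this absorption rigorous while simultaneously verifying --- under the single hypothesis that $Y$ is of class (D) --- that the local martingales appearing in both the $Y$- and the $Z$-estimates are genuine (uniformly integrable) martingales is the delicate heart of the proof; the subcriticality $\kappa<1$ in (Z) is precisely what renders the absorption feasible.
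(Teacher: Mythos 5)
The paper does not actually reprove this theorem: its proof is a two-line citation, with assertion (i) taken from \cite[Theorem 6.3]{bdh} and assertion (ii) obtained from \cite[Theorem 6.2]{bdh} (uniqueness among class (D) solutions having $Z\in\mathcal{H}^s_{\mathbb{F}}(0,T)$, $s\in(0,1)$) together with the observation, via \cite[Remark 2.1]{KRzS2} and \cite[Lemma 3.1]{bdh}, that every class (D) solution automatically has $Z\in\mathcal{H}^s_{\mathbb{F}}(0,T)$ for all $s\in(0,1)$. You instead attempt to reprove the results of \cite{bdh} from scratch, and your sketch collapses exactly at the hard points. The fatal gap in the existence part is the sentence claiming that ``running the same computation for this difference shows that $(Y^n)$ is Cauchy''. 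It does not. For a single solution, the $z$-increment coming from (Z) only needs to be \emph{bounded} in $L^1$, and your circular Young-type absorption handles that. For the difference $(Y^n-Y^m,Z^n-Z^m)$, the generator of the difference equation satisfies (Z) only with $g$ replaced by a process of the form $\hat g=g+|Y^m|+|Z^m|$ (or its power-corrected version, as in the proof of Theorem \ref{th.2}), so the same computation produces the term $2\gamma\,\mathbb{E}\int_0^T(\hat g_r+|Z^n_r-Z^m_r|)^{\kappa}\,dr$, which is $O(1)$ and does \emph{not} tend to zero as $n,m\to\infty$; using (H1) instead produces $\lambda\,\mathbb{E}\int_0^T|Z^n_r-Z^m_r|\,dr$, for which no bound uniform in $n,m$ is available below the $L^2$ scale. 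What the Cauchy argument requires is an estimate of multiplicative form, $\mathcal{C}(\lambda,\mu,\gamma,\kappa,\|\hat g\|_{L^1_{\mathbb{F}}})\cdot\psi\big(\mathbb{E}|\xi_n-\xi_m|+\mathbb{E}\int_0^T|\pi_n(f(r,0,0))-\pi_m(f(r,0,0))|\,dr\big)$ with $\psi(0+)=0$ and $\mathcal{C}$ independent of the small quantity --- i.e.\ precisely the a priori estimate that constitutes the main results of this paper (Theorems \ref{th.1}, \ref{th.2}), whose proof requires comparing with the $z$-frozen solution $(Y^0,Z^0)$ and tracking powers of $K=\mathbb{E}|\xi|+\mathbb{E}\int_0^T|f(r,0,0)|\,dr$ through the H\"older estimates. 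Recall that Fan \cite{Fan} conjectured such estimates could not hold; they certainly do not follow from ``the same computation''. A secondary flaw: your $Z^n$-estimate invokes $\int_0^T|f_n(r,Y^n_r,Z^n_r)|\,dr$, which for $k\ge 2$ is not controllable (compare Theorem \ref{th.1}(ii), stated only for $k=1$); the correct route, as in \cite[Lemma 3.1]{bdh}, uses only $\langle Y^n_r,f_n(r,Y^n_r,Z^n_r)\rangle$ together with (H2).

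The uniqueness argument is also defective, on two counts. First, the theorem is about systems ($k\ge1$, ``multidimensional'' in the title): for $k\ge2$ the identity $f(r,\bar Y_r,Z_r)-f(r,\bar Y_r,\bar Z_r)=\langle a_r,\delta Z_r\rangle$ with a single $\mathbb{R}^d$-valued process $a_r$ is in general impossible, since each component $f^i$ would require its own drift (take $d,k\ge2$ and $f^1(z)=\lambda z^{11}$, $f^2(z)=\lambda z^{22}$); hence a single Girsanov change of measure cannot remove the $z$-dependence, which is the classical obstruction forcing the monotonicity/(Z)-based arguments of \cite{bdh}. Second, even for $k=1$, class (D) is a property under $P$, whereas your conditional-expectation step needs uniform integrability of $\{|\delta Y_{\tau_n}|\}$ under $\tilde P$; the density $d\tilde P/dP$ lies in every $L^p$, $p<\infty$, but is unbounded, and uniform integrability under $P$ does not transfer to $\tilde P$ under such a change of measure. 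It is telling that your uniqueness proof never uses (Z), while the uniqueness theorem you are implicitly reproving, \cite[Theorem 6.2]{bdh}, assumes it.
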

\begin{proof}
The assertion (i) follows from \cite[Theorem 6.3]{bdh}. As to  (ii), by \cite[Theorem 6.2]{bdh},
there exists at most one solution  $(Y,Z)$ to \textnormal{BSDE}$^T(\xi,f)$ such that $Y$ is of class  \textnormal{(D)}
and $Z\in\mathcal{H}^s_{\mathbb{F}}(0,T)$, $s\in(0,1)$. So, it is enough to show that if 
$(Y,Z)$ is a solution  to \textnormal{BSDE}$^T(\xi,f)$ such that $Y$ is of class  \textnormal{(D)}, then $Z\in\mathcal{H}^s_{\mathbb{F}}(0,T)$, $s\in(0,1)$.
This follows at once from \cite[Remark 2.1]{KRzS2} and \cite[Lemma 3.1]{bdh}.
\end{proof}

\begin{remark}
\label{rem.chv}
Let $a\in \mathbb R$. Observe that if  $(Y,Z)$ is a solution to BSDE$^T(\xi,f)$, then $(\bar Y,\bar Z)$ is a solution to BSDE$^T(\bar\xi,\bar f)$,
where
\[
(\bar Y_t,\bar Z_t):= (e^{a t} Y_t, e^{a t} Z_t),\quad \bar \xi:= e^{a T}\xi, \quad \bar f(t,y,z):= e^{a t}f(t,e^{-a t}y,e^{-a t} z)-a y
\]
Clearly, if $(\xi,f)$ satisfies any of conditions (H1), (H3)--(H5), then $(\bar\xi,\bar f)$ satisfies it too.
If $f$ satisfies (H2), then $\bar f$ satisfies (H2) but with $\mu$ replaced by $\mu-a$, and if $f$ satisfies (Z), then $\bar f$
satisfies (Z) with $(\gamma,g_t)$ replaced by $(\gamma e^{a^+ T}, g_te^{\frac{-a^-t}{\kappa}})$.
\end{remark}

We let $\mathrm{sgn}(x):=\frac{x}{|x|},\, x\in\mathbb R^k,\, x\neq 0$, and $\mathrm{sgn}(x)=0,\, x=0$.

\begin{proposition}\label{3marca1}
Let $p=1$. Assume that $f$ does not depend on $z$ and that \textnormal{(H2)} is in force. Let $(Y,Z)$ be a solution to \textnormal{BSDE}$^T(\xi,f)$ such that $Y$ is of class \textnormal{(D)}. Then for any $a\ge \mu$
\begin{equation}\label{2czerwca2}
||e^{\cdot a}Y||_{\mathcal{D}^1}\le\mathbb{E}\Big( e^{a T}|\xi|+\int^T_0 e^{a r}|f(r,0)|\,dr\Big).
\end{equation}
Moreover, $Y\in\mathcal{S}^q_{\mathbb{F}}(0,T)$, $q\in(0,1)$ and
\begin{equation}\label{2czerwca4}
|e^{\cdot a}Y|_{S^q}\le\frac{1}{1-q}\Bigg[\mathbb{E}\Big(e^{a Tq}|\xi|+\int^T_0e^{a rq}|f(r,0)|\,dr\Big)\Bigg]^q.
\end{equation}
Furthermore, if $k=1$, then
\begin{equation}\label{2czerwca2td}
\mathbb E\int_0^Te^{ar}|f(r,Y_r)|\,dr\le2\mathbb{E}\Big( e^{a T}|\xi|+\int^T_0 e^{a r}|f(r,0)|\,dr\Big).
\end{equation}
\end{proposition}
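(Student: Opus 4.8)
The plan is to base everything on the Itô--Tanaka (Meyer--Itô) formula applied to $|Y|$, and, for the scalar bound, to the one-sided parts $Y^{+},Y^{-}$, using hypothesis (H2) tested at $y'=0$ to dominate the drift. The exponential weight is built in by applying the formula to $e^{a\cdot}|Y|$: differentiating the weight produces an extra term $a e^{ar}|Y_r|$ which, together with the bound from (H2), is controlled precisely because $a\ge\mu$; equivalently one may invoke Remark~\ref{rem.chv} to pass to a generator with monotonicity constant $\mu-a\le 0$. The recurring difficulty, and the step I expect to be the main obstacle, is that since $Y$ is only of class (D) and $Z\in\mathcal H^{s}_{\mathbb F}$ with $s<1$, the stochastic integrals that appear are merely \emph{local} martingales, so taking expectations requires a localization argument leaning on the class-(D) property.

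\textbf{The $\mathcal D^{1}$ bound \eqref{2czerwca2}.} I would apply the Itô--Tanaka formula to $e^{a\cdot}|Y|$. Writing $dY_r=-f(r,Y_r)\,dr+Z_r\,dB_r$ and using (H2) in the form $\langle\mathrm{sgn}(Y_r),f(r,Y_r)\rangle\le\mu|Y_r|+|f(r,0)|$, the drift of $e^{ar}|Y_r|$ is at most $(\mu-a)e^{ar}|Y_r|+e^{ar}|f(r,0)|\le e^{ar}|f(r,0)|$, while the local-time term is nonnegative and may be dropped. This yields, for any stopping time $\sigma\le T$, the pointwise bound $e^{a\sigma}|Y_\sigma|\le e^{aT}|\xi|+\int_\sigma^T e^{ar}|f(r,0)|\,dr+(M_\sigma-M_T)$, where $M=\int_0^{\cdot}e^{ar}\langle\mathrm{sgn}(Y_r),Z_r\,dB_r\rangle$ is a local martingale. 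Choosing a localizing sequence $(\tau_n)$ along which $M^{\tau_n}$ is a true martingale, conditioning on $\mathcal F_{\sigma\wedge\tau_n}$ and letting $n\to\infty$ (class (D) supplies both the integrability and the convergence $\mathbb E(e^{a\tau_n}|Y_{\tau_n}|)\to\mathbb E(e^{aT}|\xi|)$), one gets $\mathbb E(e^{a\sigma}|Y_\sigma|)\le\mathbb E\big(e^{aT}|\xi|+\int_0^T e^{ar}|f(r,0)|\,dr\big)$; the supremum over $\sigma$ gives \eqref{2czerwca2}.

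\textbf{The $\mathcal S^{q}$ bound \eqref{2czerwca4}.} I would deduce this from the previous step by a maximal-inequality argument. The pointwise estimate shows $e^{a\sigma}|Y_\sigma|\le V_\sigma:=\mathbb E[\,\eta\mid\mathcal F_\sigma]$ for every stopping time $\sigma$, with $\eta:=e^{aT}|\xi|+\int_0^T e^{ar}|f(r,0)|\,dr$; by the optional section theorem this upgrades to $\sup_{t\le T}e^{at}|Y_t|\le\sup_{t\le T}V_t$ up to indistinguishability. Doob's weak-type $(1,1)$ inequality for the nonnegative martingale $V$ gives $P(\sup_t V_t\ge\lambda)\le\mathbb E\eta/\lambda$, and inserting this bound (together with $P\le 1$) into the layer-cake formula $\mathbb E X^{q}=q\int_0^\infty\lambda^{q-1}P(X\ge\lambda)\,d\lambda$, split at $\lambda=\mathbb E\eta$, produces the constant $\tfrac1{1-q}$ and hence \eqref{2czerwca4}.

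\textbf{The scalar bound \eqref{2czerwca2td}.} Here it is cleanest to first reduce, via Remark~\ref{rem.chv}, to a non-increasing generator and then apply the Tanaka formula to $Y^{+}$ and to $Y^{-}$ separately. The conceptual crux, and the reason a naive estimate fails, is that (H2) controls $f(r,Y_r)$ only on the favourable side: on $\{Y_r>0\}$ it gives $f^{+}(r,Y_r)\le|f(r,0)|$, and symmetrically $f^{-}(r,Y_r)\le|f(r,0)|$ on $\{Y_r<0\}$, whereas $|f(r,Y_r)|$ itself is \emph{not} dominated pointwise. I would therefore use the scalar identity $|f|=2f^{+}-f$ on $\{Y_r>0\}$, bounding $2f^{+}(r,Y_r)\le 2|f(r,0)|$ and rewriting $\mathbb E\int_0^T\mathbf 1_{\{Y_r>0\}}f(r,Y_r)\,dr$ through the one-sided Tanaka identity for $Y^{+}$: there the boundary term contributes at most $\mathbb E\,\xi^{+}$, the local-time term is nonnegative and dropped, and the stochastic integral is handled by the same class-(D) localization as above. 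The symmetric computation on $\{Y_r<0\}$ contributes $\mathbb E\,\xi^{-}$, and the set $\{Y_r=0\}$ is harmless. Summing the two regions yields $\mathbb E\int_0^T e^{ar}|f(r,Y_r)|\,dr\le\mathbb E|\xi|+2\,\mathbb E\int_0^T e^{ar}|f(r,0)|\,dr$, i.e. \eqref{2czerwca2td}, the factor $2$ originating from the decomposition $|f|=2f^{+}-f$. As before, the only delicate point in legitimizing all the expectations is the class-(D) localization of the local-martingale terms.
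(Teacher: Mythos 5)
Your proof is correct, and for the first estimate it is essentially the paper's own argument: the paper likewise reduces to $\mu\le 0$ and $a=0$ via Remark \ref{rem.chv}, applies the It\^o--Tanaka inequality of \cite[Corollary 2.3]{bdh} with $\mathrm{sgn}(Y)$, localizes with $\tau_n=\inf\{t:\int_0^t|Z_r|^2\,dr\ge n\}\wedge T$ (along which the stochastic integral is a true martingale), and uses the class (D) property to pass to the limit $\mathbb E|Y_{\tau_n}|\to\mathbb E|\xi|$. The differences lie in the other two estimates. For \eqref{2czerwca4} the paper simply cites \cite[Remark 2.1]{KRzS2}, i.e.\ the inequality $\mathbb E\sup_{t\le T}|Y_t|^q\le\frac{1}{1-q}\|Y\|^q_{\mathcal D^1}$ for class (D) processes; your Doob weak-type $(1,1)$ plus layer-cake argument, split at $\lambda=\mathbb E\eta$, is precisely the standard proof of that cited inequality, so you gain self-containedness at no extra cost (note that, just like the paper's own proof after the reduction to $a=0$, your argument produces the bound with $e^{aT}$ and $e^{ar}$ inside the expectation rather than the $e^{aTq}$, $e^{arq}$ appearing in the statement; the two coincide at $a=0$). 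For \eqref{2czerwca2td} your route is genuinely different: you split according to the sign of $Y_r$, apply one-sided Tanaka formulas to $Y^{+}$ and $Y^{-}$, and use the identity $|f|=2f^{+}-f$, the factor $2$ coming from this decomposition. The paper instead extracts the bound from the very same inequality already used for \eqref{2czerwca2}: since (H2) with $\mu\le0$ gives $\langle\mathrm{sgn}(Y_r),f(r,Y_r)-f(r,0)\rangle\le0$, this nonpositive term can be moved to the left-hand side with an absolute value, and for $k=1$ it equals $|f(r,Y_r)-f(r,0)|$; letting $n\to\infty$ yields the intermediate estimate $\mathbb E\int_0^T|f(r,Y_r)-f(r,0)|\,dr\le\mathbb E\big(|\xi|+\int_0^T|f(r,0)|\,dr\big)$, and the triangle inequality then gives \eqref{2czerwca2td} with the same constant $2$. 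The paper's version is more economical, since a single application of the signed It\^o--Tanaka inequality delivers \eqref{2czerwca2} and \eqref{2czerwca2td} simultaneously and the intermediate bound on $|f(\cdot,Y)-f(\cdot,0)|$ carries slightly more information; your version makes transparent the one-sided nature of the control that (H2) provides ($f^{+}(r,Y_r)\le|f(r,0)|$ on $\{Y_r>0\}$ and symmetrically), and both arguments are valid with identical constants.
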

\begin{proof}
In light of Remark \ref{rem.chv}, we may assume that $\mu\le 0 $ in condition (H2) and $a=0$. 
Let us define
\[
\tau_n=\inf\big\{t\ge 0:\,\int^t_0|Z_r|^2\,dr\ge n\big\}\wedge T.
\]
By the It\^o-Tanaka  formula (see \cite[Corollary 2.3]{bdh}), for any stopping time $\sigma\le \tau_k$,
\begin{equation}\label{2czerwca1}
\begin{split}
|Y_{\sigma}|\le|Y_{\tau_n}|+\int^{\tau_n}_{\sigma}\langle \mathrm{sgn}(Y_r),f(r,Y_r)\rangle \,dr-\int^{\tau_n}_{\sigma}\langle \mathrm{sgn}(Y_r)Z_r,\,dB_r\rangle.
\end{split}
\end{equation}
By (H2)
\[
\langle \mathrm{sgn}(Y_r),f(r,Y_r)-f(r,0)\rangle\le0.
\]
From the above and  \eqref{2czerwca1} we deduce that
\begin{equation}
\begin{split}
|Y_{\sigma}|+\int_{\sigma}^{\tau_n}\Big|\langle \mathrm{sgn}(Y_r),f(r,Y_r)-f(r,0)\rangle\Big|\,dr 
\le|Y_{\tau_n}|+\int^T_0|f(r,0)|\,dr-\int^{\tau_n}_{\sigma}\langle\mathrm{sgn}(Y_r)Z_r,\,dB_r\rangle.
\end{split}
\end{equation}
By the definition of $(\tau_n)$, we have that $\int^{\cdot\wedge\tau_n}_0\,\langle\mathrm{sgn}(Y_r)Z_r,dB_r\rangle$ is a martingale. Therefore
\begin{equation}\label{2czerwca3}
\begin{split}
\mathbb{E}|Y_{\sigma}|+\mathbb E\int_{\sigma}^{\tau_n}\Big|\langle \mathrm{sgn}(Y_r),f(r,Y_r)-f(r,0)\rangle\Big|\,dr\le\mathbb{E}\Big(|Y_{\tau_n}|+\int^T_0|f(r,0)|\,dr\Big),
\end{split}
\end{equation}
Passing to the limit with $n\to\infty$ we get \eqref{2czerwca2}. 
By \cite[Remark 2.1]{KRzS2} we have 
\begin{equation}
\label{eq.dq}
\mathbb E\sup_{t\le T}|Y_t|^q\le \frac{1}{1-q}\|Y\|_{\mathcal D^1}^q
\end{equation}
for any $q\in(0,1)$. This combined with \eqref{2czerwca2} yields  \eqref{2czerwca4}. 
In case $k=1$
\[
\mathbb E\int_{\sigma}^{\tau_n}\Big|\langle \mathrm{sgn}(Y_r),f(r,Y_r)-f(r,0)\rangle\Big|\,dr=
\mathbb E\int_{\sigma}^{\tau_n}|f(r,Y_r)-f(r,0)|\,dr.
\]
Therefore, letting $n\to\infty$ in \eqref{2czerwca3} gives 
\begin{equation}\label{2czerwca3td}
\begin{split}
\mathbb E \int_0^T|f(r,Y_r)-f(r,0)|\,dr\le \mathbb{E}\Big(|\xi|+\int^T_0|f(r,0)|\,dr\Big).
\end{split}
\end{equation}
From this we easily conclude  \eqref{2czerwca2td}.
\end{proof}

\subsection{Main results}
 We shall adopt the following notation: $e_a:\mathbb R\to\mathbb R$, with $e_a(x):= e^{ax},\, x\in\mathbb R$.
 We will use frequently the following function
 \[
 \mathcal C(x,y,z):= 2(1+y)x[(z+\lambda)\vee1](1\vee T)^{3},\quad x,y,z\ge 0.
 \]
 Observe that for any $x,y_1,y_2,z\ge 0$
 \begin{equation}
 \label{eq.sub2}
  \mathcal C(x,y_1+y_2,z)\le   \mathcal C(x,y_1,z)+  \mathcal C(x,y_2,z).
 \end{equation}

\begin{theorem}
\label{th.1}
Let $p=1$. Assume  \textnormal{(H1)--(H5), (Z)}.  
\begin{enumerate}
\item[(i)] For any $q\in (\kappa,1)$ there exists $c_{\kappa,q}>0$ - depending only on $q,\kappa$ - 
such that  for any solution  $(Y,Z)$ to \textnormal{BSDE}$(\xi,f)$ such that $Y$  is of class \textnormal{(D)}
and any $a\ge \mu+\frac{\lambda^2}{1\wedge (\frac q\kappa -1 )}$, we have
\begin{equation}
\label{eq.16.01.1}
\begin{split}
&\|e_aY\|_{\mathcal{D}^1}+\mathbb E\Big(\int^T_0 e^{2ar} |Z_r|^2\,dr\Big)^{\frac{q}{2}}
\\&\quad\quad\le \mathcal C\big(c_{\kappa,q},\|e_{-\frac{a^-}{\kappa}} g\|_{L^1_{\mathbb F}}, e^{a^+T}\gamma\big)\psi_1\Big(e^{aT}\mathbb E|\xi|+\mathbb E\int^T_0 e^{ar}|f(r,0,0)|\,dr\Big),
\end{split}
\end{equation}
where $\psi_1(x)=x+x^{\kappa^2(1-q)},\, x\ge 0$.
\item[(ii)] Let $q\in (\kappa,1)$ and $k=1$.
Then for any solution  $(Y,Z)$ to \textnormal{BSDE}$(\xi,f)$, such that $Y$  is of class \textnormal{(D)},
and any $a\ge \mu+\frac{\lambda^2}{1\wedge (\frac q\kappa -1 )}$, we have
\begin{equation}
\label{eq.16.01.1ff}
\begin{split}
\mathbb E\int^T_0 &e^{ar}|f(r,Y_r,Z_r)|\,dr
\\&\le \mathcal C^2\big(c_{\kappa,q},\|e_{- \frac{a^-}{\kappa}} g\|_{L^1_{\mathbb F}}, e^{a^+T}\gamma\big)
 \psi_2\Big(e^{a T}\mathbb E|\xi|+\mathbb E\int^T_0 e^{a r}|f(r,0,0)|\,dr\Big),
\end{split}
\end{equation}
 with $c_{\kappa,q}$ as in (i),  and  $\psi_2(x)=x+x^{\kappa^3 (1-q)^2},\,  x\ge 0.$
\end{enumerate}

\end{theorem}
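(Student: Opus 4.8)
The plan is to reduce to a dissipative normalization and then run two coupled energy estimates — one in $\mathcal D^1$ via the It\^o--Tanaka formula, one in $\mathcal H^q$ via the It\^o formula for $|Y|^2$ — and finally to decouple the resulting self-referential system. First I would invoke Remark \ref{rem.chv} with the given $a$: after the change of variables $(Y,Z)\mapsto(e_aY,e_aZ)$ the new generator satisfies (H2) with $\mu-a\le-\lambda^2/(1\wedge(\frac q\kappa-1))$, (Z) with the advertised $(\gamma e^{a^+T},e_{-a^-/\kappa}g)$, and (H1) unchanged, so it suffices to treat the case $a=0$ with $\mu\le-\lambda^2/(1\wedge(\frac q\kappa-1))$; all the $a$-dependence is thereby pushed into $\mathcal C$ and into the data $A:=\mathbb E|\xi|+\mathbb E\int_0^T|f(r,0,0)|\,dr$. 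I write $P:=\|Y\|_{\mathcal D^1}$ and $Q:=|Z|_{\mathcal H^q}$.

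For the $\mathcal D^1$ bound I would follow the proof of Proposition \ref{3marca1}, but split the generator at $y=0$ before estimating: in \eqref{2czerwca1} write $f(r,Y_r,Z_r)=[f(r,Y_r,Z_r)-f(r,0,Z_r)]+[f(r,0,Z_r)-f(r,0,0)]+f(r,0,0)$. The first bracket, paired with $\mathrm{sgn}(Y_r)$, is $\le\mu|Y_r|\le0$ by (H2); the decisive point is that the second bracket is controlled by (Z) evaluated at $y=0$, hence by $\gamma(g_r+|Z_r|)^\kappa$ \emph{without} any $|Y_r|$ term. Taking expectations and letting $n\to\infty$ gives $P\le A+\gamma\,\mathbb E\int_0^T(g_r+|Z_r|)^\kappa\,dr$. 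By subadditivity of $t\mapsto t^\kappa$, H\"older in time, and the elementary subordination $|Z|_{\mathcal H^\kappa}\le(|Z|_{\mathcal H^q})^{\kappa/q}$ (Jensen, valid precisely because $\kappa<q$), this becomes
\[
P\le A+c\,\gamma\big(\|g\|_{L^1_{\mathbb F}}^{\kappa}+Q^{\kappa/q}\big).
\]

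For the $\mathcal H^q$ bound I would apply It\^o to $|Y|^2$ on the intervals $[[0,\tau_n]]$, again splitting $\langle Y_r,f(r,Y_r,Z_r)\rangle$ at $y=0$ and now using (H2) together with the Lipschitz bound (H1) for the $z$-increment: $\langle Y_r,f\rangle\le\mu|Y_r|^2+\lambda|Y_r||Z_r|+|Y_r||f(r,0,0)|$. The term $\lambda|Y_r||Z_r|$ is absorbed into $|Z_r|^2$ and $|\mu||Y_r|^2$ by Young's inequality, which is where strong dissipativity enters. Raising to the power $q/2$, taking expectations, estimating the martingale by the Burkholder--Davis--Gundy inequality and absorbing via Young, I obtain $Q\le c\big(\mathbb E\,(\sup_t|Y_t|)^q+A^q\big)$, whence by \eqref{eq.dq} we get $Q\le c\,(P^q+A^q)$.

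The last and hardest step is to decouple these two inequalities. Substituting the $\mathcal H^q$ bound into the $\mathcal D^1$ bound produces, through $Q^{\kappa/q}\lesssim P^\kappa+A^\kappa$, a purely self-referential inequality of the schematic form $P\le(\text{data})+c\gamma P^{\kappa}$; its naive resolution only yields a bound with a spurious additive constant independent of the data, which is \emph{not} admissible since $\psi_1(0)=0$. The obstacle is therefore to rule out this ``upper branch'' and to upgrade the self-referential estimate to the genuine gauge $\psi_1(x)=x+x^{\kappa^2(1-q)}$. I would overcome it by controlling $(Y,Z)$ through truncated problems with $L^{p}$-data, $p=\frac q\kappa>1$, for which the floor-free weighted estimate \eqref{13stycznia20} of Proposition \ref{13stycznia19} applies — this is exactly the choice of $p$ that dictates the threshold $\lambda^2/(1\wedge(\frac q\kappa-1))$ — and then optimizing the truncation level while keeping the data factors attached to the nonlinear terms; tracking how the exponents $\kappa$, $\kappa/q$ and the factor $1-q$ from \eqref{eq.dq} compose is what produces the precise power $\kappa^2(1-q)$ and the constant $\mathcal C(c_{\kappa,q},\|e_{-a^-/\kappa}g\|_{L^1_{\mathbb F}},e^{a^+T}\gamma)$, the subadditivity \eqref{eq.sub2} being used to separate the $g$- and data-contributions. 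Finally, for part (ii) with $k=1$, I would start from the one-dimensional bound behind \eqref{2czerwca2td} to control $\mathbb E\int_0^Te^{ar}|f(r,Y_r,0)|\,dr$ by the data, add the $z$-increment $\gamma\,\mathbb E\int_0^Te^{ar}(g_r+|Y_r|+|Z_r|)^\kappa\,dr$, and insert the bounds from part (i); composing $\psi_1$ once more with the extra exponent coming from $|Z|_{\mathcal H^\kappa}$ upgrades $\psi_1$ to $\psi_2(x)=x+x^{\kappa^3(1-q)^2}$ and the factor $\mathcal C$ to $\mathcal C^2$.
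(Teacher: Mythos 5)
Your two coupled estimates are sound, and you have correctly located the real difficulty: feeding $Q\le c(P^q+A^q)$ into $P\le A+c\gamma(\|g\|^{\kappa}_{L^1_{\mathbb F}}+Q^{\kappa/q})$ produces an inequality of the form $P\le D+cP^{\kappa}$, whose solution set is an interval $[0,x^*]$ with $x^*$ bounded away from $0$ as the data $D\to0$; hence this route only yields a bound with a data-independent additive floor, which is strictly weaker than \eqref{eq.16.01.1} because $\psi_1(0)=0$. The gap is in your proposed escape. ``Truncated problems with $L^{q/\kappa}$-data, optimizing the truncation level'' is not a construction: you never say what is truncated, and truncating $(\xi,f(\cdot,0,0))$ cannot work, since relating the truncated solutions back to $(Y,Z)$ would require exactly the $L^1$-stability estimate being proven --- the circularity is reproduced, not broken. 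Any scheme in which $(Y,Z)$ is ultimately estimated against itself through sublinear powers keeps the floor; the loop must be broken by estimating against an auxiliary object that is controlled by the data alone.

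That auxiliary object is the paper's key idea, absent from your proposal. Let $(Y^0,Z^0)$ be the class (D) solution of BSDE$^T(\xi,f^0)$ with the $z$-frozen generator $f^0(t,y):=f(t,y,0)$ (Theorem \ref{12sierpnia1}); since $f^0$ does not depend on $z$, Proposition \ref{3marca1} and \cite[Lemma 3.1]{bdh} bound $\|Y^0\|_{\mathcal D^1}$ and $|Z^0|_{\mathcal H^q}$ by $K:=\mathbb E|\xi|+\mathbb E\int_0^T|f(r,0,0)|\,dr$ alone. Then $(\bar Y,\bar Z):=(Y-Y^0,Z-Z^0)$ solves BSDE$^T(0,F)$ with $F(t,y,z)=f(t,y+Y^0_t,z+Z^0_t)-f(t,Y^0_t,0)$, and the decisive point --- this is exactly where (Z) upgrades integrability --- is that (H1) and (Z) together give
\[
|F(t,0,0)|=|f(t,Y^0_t,Z^0_t)-f(t,Y^0_t,0)|\le(\lambda+\gamma)\big(g_t\mathbf 1_{\{|Z^0_t|\ge1\}}+|Y^0_t|+|Z^0_t|\big)^{\kappa},
\]
a $\kappa$-power of integrable quantities, hence in $L^{p_0}$ with $p_0=q/\kappa>1$ (the indicator comes from using (H1) when $|Z^0_t|<1$ and (Z) otherwise). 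After checking $\bar Y\in\mathcal S^{p_0}$, $\bar Z\in\mathcal H^{p_0}$ via \cite[Corollary 2.3, Lemma 3.1]{bdh}, Proposition \ref{13stycznia19} --- and this, not your step, is where the threshold $a\ge\mu+\lambda^2/(1\wedge(\frac q\kappa-1))$ is consumed --- bounds $\|\bar Y\|_{\mathcal S^{p_0}}$ and $\|\bar Z\|_{\mathcal H^{p_0}}$ by $\mathbb E\big(\int_0^T|F(r,0,0)|\,dr\big)^{p_0}$, a quantity depending only on $(Y^0,Z^0)$ and $g$, not on $(\bar Y,\bar Z)$: no self-reference occurs anywhere. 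Moreover, the indicator is what attaches a data factor to the $g$-term (your standalone additive $\|g\|^\kappa$ would otherwise violate $\psi_1(0)=0$): as in \eqref{eq.split2}--\eqref{eq.split5}, H\"older and $\mathbf 1_{\{|Z^0_r|\ge1\}}\le|Z^0_r|^q$ give $\mathbb E\big(\int_0^Tg_r^\kappa\mathbf 1_{\{|Z^0_r|\ge1\}}\,dr\big)^{p_0}\lesssim\|g\|^q_{L^1_{\mathbb F}}K^{q(1-q)}$, so the whole bound vanishes with $K$ and yields precisely $\mathcal C(\cdot)\psi_1(K)$ after taking $p_0$-th roots. Your sketch of part (ii) is essentially the paper's argument (with the correction that one must freeze $z$ at $Z_t$, i.e.\ apply \eqref{2czerwca2td} to $f_Z(t,y):=f(t,y,Z_t)$, not at $z=0$, since $(Y,Z)$ solves the equation with driver $f_Z$ but not the one with driver $f(\cdot,\cdot,0)$), but it presupposes part (i), so the missing decomposition above is the essential defect.
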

\begin{proof}
By Remark \ref{rem.chv}, we may assume that 
 $0=a\ge  \mu+\frac{\lambda ^2}{1\wedge(\frac q\kappa-1)}$.
Throughout the proof 
$c_{\gamma_1,...,\gamma_k}$ denote a constant, which may vary from line to line, but it   depends  only  on parameters $\gamma_1,\gamma_2,...,\gamma_k$.
  Throughout the proof, we frequently use  the following elementary inequality
\begin{equation}
\label{eq.el1}
x^b\le x+x^{a},\quad x\ge 0, \, 0\le a\le b\le 1.
\end{equation}
Fix $q\in (\kappa,1)$. Set $p_0:=\frac{q}{\kappa}>1$.
We let $f^0(t,y):= f(t,y,0)$.
Let $(Y^0,Z^0)$ be a solution to BSDE$^T(\xi,f^0)$, such that $Y^0$ is of class (D) and $Z^0\in \mathcal{H}^s_{\mathbb F}(0,T),\, s\in (0,1)$
(see Theorem \ref{12sierpnia1}).  Observe that the pair $(\bar Y,\bar Z):= (Y-Y^0,Z-Z^0)$ is a solution to 
BSDE$^T(0,F)$ with
\[
F(t,y,z):= f(t,y+Y^0_t,z+Z^0_t)-f(t,Y^0_t,0).
\]
It is an elementary check that $F$ satisfies conditions (H1)--(H3), (Z) and  (H5) for any $p\ge 1$.  
By \cite[Corollary 2.3]{bdh}
\begin{align*}
|\bar Y_t|\le \mathbb E\Big(\int_0^T|F(r,0,\bar Z_r)|\,dr\Big|\mathcal F_t\Big)&=\mathbb E\Big(\int_0^T|f(r,Y^0_r, Z_r)-f(r,Y^0_r,0)|\,dr\Big|\mathcal F_t\Big) 
\\&\le \gamma\mathbb E\Big(\int_0^T(g_r+|Y^0_r|+|Z_r|)^\kappa\,dr\Big|\mathcal F_t\Big).
\end{align*}
Therefore, by  Doob's inequality,
\[
\mathbb E\sup_{t\le T}|\bar Y_t|^{p_0}\le \gamma c_pT^{p_0-1} \mathbb E\int_0^T(g_r+|Y^0_r|+|Z_r|)^{q}\,dr<\infty
\]
Consequently, by \cite[Lemma 3.1]{bdh} $\bar Z\in\mathcal H_{\mathbb F}^{p_0}(0,T)$.
Observe that conditions (H1) and (Z) (for $f$) together imply that 
\[
|F(t,0,0)|\le (\lambda+\gamma)(g_t\mathbf1_{\{|Z^0_t|\ge 1\}}+|Y^0_t|+|Z^0_t|)^\kappa,\quad t\in [0,T].
\] 
Therefore,  by Proposition \ref{13stycznia19} and \cite[Lemma 3.1]{bdh},
\begin{equation}
\label{eq.mmm1}
\begin{split}
&\mathbb E\sup_{t\le T}|\bar Y_t|^{p_0}+\mathbb E\Big(\int_0^T|\bar Z_r|^2\,dr\Big)^{p_0/2}\le c_{p_0}\mathbb E\Big(\int_0^T|F(r,0,0)|\,dr\Big)^{p_0}
\\&
\le c_{p_0}(\gamma+\lambda) \mathbb E\Big(\int_0^T(|g_r|\mathbf1_{\{|Z^0_r|\ge 1\}}+|Y^0_r|+|Z^0_r|)^\kappa\,dr\Big)^{p_0}
\\&
\le 3^{p_0} c_{p_0}(\gamma+\lambda)\Big[\mathbb E \Big(\int_0^T  g^\kappa_r\mathbf1_{\{|Z^0_r|\ge 1\}}\,dr\Big)^{p_0}
+T^{p_0}\mathbb E\sup_{t\le T} |Y^0_t|^{q}+T^{\frac{p_0(2-\kappa)}{2}}\mathbb E\Big(\int_0^T|Z^0_r|^2\,dr\Big)^{\frac q2}\Big]
\\&
\le 3^{p_0}(1\vee T)^{p_0}c_{p_0}(\gamma+\lambda)\Big[ \mathbb E\Big(\int_0^T g^\kappa_r\mathbf1_{\{|Z^0_r|\ge 1\}}\,dr\Big)^{p_0}+\mathbb E\sup_{t\le T}|Y^0_t|^{q}
+\mathbb E\Big(\int_0^T|Z^0_r|^2\,dr\Big)^{\frac q2}\Big]
\\&
\le 3^{p_0}(1\vee T)^{p_0}c_{p_0}(\gamma+\lambda)\Big[ \mathbb E\Big(\int_0^T g^\kappa_r\mathbf1_{\{|Z^0_r|\ge 1\}}\,dr\Big)^{p_0}
+c_q\Big(\mathbb E\int_0^T|f(r,0,0)|\,dr+\mathbb E|\xi|\Big)^q\Big] .
\end{split}
\end{equation}
For brevity, we let  $T_1:=1\vee T$, $\gamma_\lambda:=(\lambda+\gamma)\vee 1$ and $A:=3^{p_0}T_1^{p_0}c_{p_0}\gamma_\lambda$.
Now, we shall estimate the first term on the right-hand side of \eqref{eq.mmm1}. 
By using H\"older's inequality, we compute that
\begin{equation}
\label{eq.split2}
\begin{split}
\mathbb E\Big(\int_0^T g^\kappa_r\mathbf1_{\{|Z^0_r|\ge 1\}}\,dr\Big)^{p_0}
&\le T^{p_0-1}\mathbb E\Big(\int_0^T g^q_r\mathbf1_{\{|Z^0_r|\ge 1\}}\,dr\Big)
\\&\le T^{p_0-1}\Big(\mathbb E\int_0^Tg_r\,dr\Big)^{q} \Big[\mathbb E\Big(\int_0^T\mathbf1_{\{|Z^0_r|\ge 1\}}\,dr\Big)\Big]^{1-q}.
\end{split}
\end{equation}
By H\"older's inequality again
\begin{equation}
\label{eq.split3}
\begin{split}
\Big[\mathbb E\Big(\int_0^T\mathbf1_{\{|Z^0_r|\ge 1\}}\,dr\Big)\Big]^{1-q}
&=\Big[\mathbb E\Big(\int_0^T\mathbf1_{\{|Z^0_r|^q\ge 1\}}\,dr\Big)\Big]^{1-q}
\\&\le
\Big[\mathbb E\int_0^T|Z^0_r|^q\,dr\Big]^{1-q}\le T^{\frac{(2-q)(1-q)}{2}}\Big[\Big(\mathbb E\int_0^T|Z^0_r|^2\,dr\Big)^{\frac q2}\Big]^{1-q}.
\end{split}
\end{equation}
Applying, respectively,   \cite[Lemma 3.1]{bdh}, Proposition \ref{3marca1} and  Jensen's inequality, we find that
\begin{equation}
\label{eq.split5}
\begin{split}
\Big[\mathbb E&\Big(\int_0^T|Z^0_r|^2\,dr\Big)^{\frac q2}\Big]^{1-q}\le 
c_q\Big[\mathbb E\sup_{t\le T}|Y^0_t|^q+\mathbb E\Big(\int_0^T|f(r,0,0)|\,dr\Big)^q\Big]^{1-q}
\\&
\le c_q\Big[\frac{1}{1-q}\Big(\mathbb E|\xi|+\mathbb E\int_0^T|f(r,0,0)|\,dr\Big)^q+\mathbb E\Big(\int_0^T|f(r,0,0)|\,dr\Big)^q\Big]^{1-q}
\\&
\le c_q\Big(\mathbb E\int_0^T|f(r,0,0)|\,dr+\mathbb E|\xi|\Big)^{q(1-q)}.
\end{split}
\end{equation}
Set 
$K:=\mathbb E\int_0^T|f(r,0,0)|\,dr+\mathbb E|\xi|$. Combining 
\eqref{eq.mmm1}--\eqref{eq.split5}
implies that
\begin{equation}
\label{eq.split6}
\|\bar Y\|^{p_0}_{\mathcal S^{p_0}}+\|\bar Z\|^{p_0}_{\mathcal H^{p_0}}
\le c_qA\Big(T_1^{p_0}\|g\|^q_{L^1_{\mathbb F}}K^{q(1-q)}+K^q\Big).
\end{equation}
Hence
\[
\|\bar Y\|^{p_0}_{\mathcal{D}^1}\le c_qA\Big(T_1^{p_0}\|g\|^q_{L^1_{\mathbb F}}K^{q(1-q)}+K^q\Big),\quad |\bar Z|^{p_0/q}_{\mathcal H^q}\le  c_qA\Big(T_1^{p_0}\|g\|^q_{L^1_{\mathbb F}}K^{q(1-q)}+K^q\Big).
\]
Consequently,
\[
\| Y\|_{\mathcal{D}^1}\le (c_qA)^{1/p_0}\Big(T_1^{p_0}\|g\|^q_{L^1_{\mathbb F}}K^{q(1-q)}+K^q\Big)^{1/p_0}+\|Y^0\|_{\mathcal{D}^1},
\]
\[
| Z|_{\mathcal H^q}\le  (c_qA)^{q/p_0}\Big(T_1^{p_0}\|g\|^q_{L^1_{\mathbb F}}K^{q(1-q)}+K^q\Big)^{q/p_0}+|Z^0|_{\mathcal H^q}.
\]
By Proposition \ref{3marca1}, \cite[Lemma 3.1]{bdh} and \eqref{eq.el1},
\begin{equation}
\label{eq.split8aa}
\begin{split}
\|Y\|_{\mathcal{D}^1}&\le (c_qA)^{1/p_0}\Big(T_1^{p_0}\|g\|^q_{L^1_{\mathbb F}}K^{q(1-q)}+K^q\Big)^{1/p_0}+K
\\&
\le (1+\|g\|_{L^1_{\mathbb F}})c_{\kappa,q}\gamma_\lambda T_1^{3}(K^{\kappa^2(1-q)}+K)
=\frac12 \mathcal C\big(c_{\kappa,q},\|g\|_{L^1_{\mathbb F}},\gamma\big)(K^{\kappa^2(1-q)}+K),
\end{split}
\end{equation}
and
\begin{equation}
\label{eq.split8bb}
\begin{split}
|Z|_{\mathcal H^q}&\le (c_qA)^{q/p_0}\Big(T_1^{p_0}\|g\|^q_{L^1_{\mathbb F}}K^{q(1-q)}+K^q\Big)^{q/p_0} +c_qK^q
\\&
\le  (1+\|g\|_{L^1_{\mathbb F}})c_{\kappa,q}\gamma_\lambda T_1^{3}(K^{\kappa^2(1-q)}+K)
=\frac12\mathcal C\big(c_{\kappa,q},\|g\|_{L^1_{\mathbb F}},\gamma\big)(K^{\kappa^2(1-q)}+K).
\end{split}
\end{equation}
We see that \eqref{eq.split8aa}, \eqref{eq.split8bb} imply \eqref{eq.16.01.1}.
In order to obtain \eqref{eq.16.01.1ff}, we look at $(Y,Z)$ as a solution
to BSDE$^T(\xi,f_Z)$, where $f_Z(t,y):= f(t,y,Z_t)$, in other words, we freeze  $Z$ in the driver $f$. From this perspective  $(Y,Z)$
is a solution to BSDE with the driver independent of $z$ variable. Therefore, by \eqref{2czerwca2td}
applied to $f_Z$, we have
\begin{equation}
\label{eq.split7}
\mathbb E\int_0^T|f(r,Y_r,Z_r)|\,dr\le 2\Big(\mathbb E|\xi|+\mathbb E\int_0^T|f(r,0,Z_r)|\,dr\Big).
\end{equation}
By (Z) and (H1)
\begin{equation}
\label{eq.split8}
\mathbb E\int_0^T|f(r,0,Z_r)|\,dr\le \gamma_\lambda \mathbb E\int_0^T(g_r^\kappa\mathbf1_{\{|Z_r|\ge 1\}}+|Z_r|^\kappa)\,dr
+\mathbb E\int_0^T|f(r,0,0)|\,dr.
\end{equation}
By H\"older's inequality 
\begin{equation}
\label{eq.split9}
\mathbb E\int_0^Tg_r^\kappa\mathbf1_{\{|Z_r|\ge 1\}}\,dr\le 
\|g\|_{L^1_{\mathbb F}}^\kappa\Big(\mathbb E\int_0^T\mathbf1_{\{|Z_r|\ge 1\}}\,dr\Big)^{1-\kappa},
\end{equation}
\begin{equation}
\label{eq.split10}
\Big(\mathbb E\int_0^T\mathbf1_{\{|Z_r|\ge 1\}}\,dr\Big)^{1-\kappa}\le \Big(\mathbb E\int_0^T|Z_r|^q\,dr\Big)^{1-\kappa}
\le  \Big[ T^{1-q/2} \mathbb E\Big(\int_0^T|Z_r|^2\,dr\Big)^{q/2}\Big]^{1-\kappa},
\end{equation}
and
\begin{equation}
\label{eq.split11}
 \mathbb E\int_0^T|Z_r|^\kappa\,dr
\le   T^{1-\frac{\kappa}{2}} \Big[\mathbb E\Big(\int_0^T|Z_r|^2\,dr\Big)^{q/2}\Big]^{\kappa/q}.
\end{equation}
By virtue of 
\eqref{eq.split7}--\eqref{eq.split11},
we conclude that
\begin{equation}
\label{eq.split55td}
\begin{split}
\mathbb E\int_0^T|f(r,Y_r,Z_r)|\,dr&\le 2\Big[K+  \gamma_\lambda \|g\|_{L^1_{\mathbb F}}^\kappa\Big[ T^{1-q/2} |Z|_{\mathcal H^q}\Big]^{1-\kappa}
+\gamma_\lambda T^{1-\frac\kappa2} |Z|_{\mathcal H^q}^{\kappa/q} \Big]
\\& \le 2\Big[K+  \gamma_\lambda \|g\|_{L^1_{\mathbb F}}^\kappa T_1^2 |Z|_{\mathcal H^q}^{1-\kappa}+\gamma_\lambda T_1 |Z|_{\mathcal H^q}^{\kappa/q} \Big].
\end{split}
\end{equation}
This combined with \eqref{eq.split8bb} gives, with the shorthand $\mathcal C=\mathcal C\big(c_{\kappa,q},\|g\|_{L^1_{\mathbb F}},\gamma\big)$,
\begin{equation}
\label{eq.split55}
\begin{split}
\mathbb E\int_0^T|f(r,Y_r,Z_r)|\,dr&\le 2\Big[K+  T_1^2\gamma_\lambda \mathcal C^{1-\kappa}(K^{\kappa q(1-q)(1-\kappa)}+K^{q(1-\kappa)})
\\&\quad
+\gamma_\lambda T_1\mathcal C^{\kappa/q}(K^{\kappa^2 (1-q)}+K^{\kappa}) \Big].
\end{split}
\end{equation}
Observe that $\kappa^3(1-q)^2$ is smaller than any exponent of a power with base $K$
on the right-hand side of \eqref{eq.split55}.
 Therefore, from  \eqref{eq.el1}  and  \eqref{eq.split55}, we conclude that 
\begin{equation}
\label{eq.16.01.1uu}
\begin{split}
\mathbb E\int^T_0 |f(r,Y_r,Z_r)|\,dr
\le \mathcal C^2\Big(K+K^{\kappa^3(1-q)^2}\Big).
\end{split}
\end{equation}
This completes the proof.
\end{proof}

\begin{theorem}
\label{th.2}
Let $p=1$. Consider a function  $\bar f:\Omega\times [0,T]\times\mathbb R^k\times\mathbb R^{d\times k}\to \mathbb R^k$
and $\bar\xi\in L^1(\mathcal F_T)$.
Assume that \textnormal{(H1)-(H5), (Z)} are in force. 
Let $(Y,Z), (\bar Y,\bar Z)$ be solutions 
to \textnormal{BSDE}$^T(\xi,f)$, \textnormal{BSDE}$^T(\bar\xi,\bar f)$, respectively, 
such that $Y, \bar Y$ are of class \textnormal{(D)}. 
Suppose that $\mathbb E\int_0^T|f(r,\bar Y_r,\bar Z_r)-\bar f(r,\bar Y_r,\bar Z_r)|\,dr<\infty$. 
\begin{enumerate}
\item[(i)]
For any $q\in (\kappa,1)$
there exists $c_{\kappa,q}>0$ - depending only on $\kappa,q$ - such that for any $a\ge \mu+\frac{\lambda^2}{1\wedge(\sqrt{\frac q\kappa}-1)}$,
\begin{equation}
\label{eq.split75}
\begin{split}
\|e_a(Y-\bar Y)\|_{\mathcal D^1}&+|e_a(Z-\bar Z)|_{\mathcal H^q}
\\&
\le 
\mathcal C\big(c_{\kappa,q},\|e_{- \frac{a^-}{\hat\kappa}}\hat g\|_{L^1_{\mathbb F}},2e^{a^+T}\gamma\big)
\psi_3(\|\xi-\bar\xi\|_{L^1}+\||f-\bar f|(\cdot,\bar Y,\bar Z)\|_{L^1_{\mathbb F}}),
\end{split}
\end{equation}
where $\hat{g}_t:=g_t+3+|\bar{Y}_t|^{\frac{\kappa}{\hat\kappa}}+|\bar{Z}_t|^{\frac{\kappa}{\hat\kappa}}$, $\hat\kappa :=\sqrt{\kappa q}$, and 
$\psi_3(x)=x+x^{\hat \kappa^2(1-q)},\, x\ge 0$.
\item[(ii)] Let $q\in (\kappa,1)$ and $k=1$. Then for  any  $a\ge \mu+\frac{\lambda^2}{1\wedge(\sqrt{\frac q\kappa}-1)}$,
\begin{equation}
\label{eq.split75td}
\begin{split}
\mathbb E\int_0^Te^{ar}&|f(r,Y_r,Z_r)-\bar f(r,\bar Y_r,\bar Z_r)|\,dr
\\&
\le 
\mathcal C^2\big(c_{\kappa,q},\|e_{- \frac{a^-}{\hat\kappa}}\hat g\|_{L^1_{\mathbb F}},2e^{a^+T}\gamma\big)
\psi_4(\|\xi-\bar\xi\|_{L^1}+\||f-\bar f|(\cdot,\bar Y,\bar Z)\|_{L^1_{\mathbb F}}),
\end{split}
\end{equation}
with $c_{\kappa,q}, \hat\kappa,\hat g$ as in (i), and $\psi_4(x):=x+x^{\hat\kappa^3 (1-q)^2} $.
\end{enumerate}
\end{theorem}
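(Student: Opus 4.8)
The plan is to reduce both assertions to the a priori estimate of Theorem \ref{th.1} applied to the \emph{difference} equation. Put $\tilde Y:=Y-\bar Y$, $\tilde Z:=Z-\bar Z$, $\tilde\xi:=\xi-\bar\xi$, and
\[
\tilde f(t,y,z):=f(t,\,y+\bar Y_t,\,z+\bar Z_t)-\bar f(t,\bar Y_t,\bar Z_t).
\]
Then $\tilde f(t,\tilde Y_t,\tilde Z_t)=f(t,Y_t,Z_t)-\bar f(t,\bar Y_t,\bar Z_t)$, so subtracting the two equations shows that $(\tilde Y,\tilde Z)$ solves BSDE$^T(\tilde\xi,\tilde f)$, with $\tilde Y$ of class (D) as a difference of two class-(D) processes. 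Crucially, the integrand on the left of \eqref{eq.split75td} is exactly $|\tilde f(r,\tilde Y_r,\tilde Z_r)|$, so part (ii) will be nothing more than Theorem \ref{th.1}(ii) for this difference equation, and part (i) will be Theorem \ref{th.1}(i). The whole proof thus hinges on checking that $\tilde f$ satisfies the hypotheses of Theorem \ref{th.1} with exactly the parameters advertised in the statement.

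Conditions (H1), (H2), (H3) are inherited verbatim, with the \emph{same} $\lambda,\mu$, because shifting the arguments of $f$ by the fixed processes $\bar Y_t,\bar Z_t$ affects neither the Lipschitz bound in $z$ nor the monotonicity in $y$. For (H5) observe that $\tilde\xi=\xi-\bar\xi\in L^1$ and that $\tilde f(\cdot,0,0)=(f-\bar f)(\cdot,\bar Y,\bar Z)\in L^1_{\mathbb F}$ precisely by the standing hypothesis $\mathbb E\int_0^T|f-\bar f|(r,\bar Y_r,\bar Z_r)\,dr<\infty$; condition (H4) is a routine consequence of (H4) and (Z) for $f$.

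The delicate point, and the main obstacle, is condition (Z). Abbreviating $w:=|y|+|z|$ and $B:=g_t+|\bar Y_t|+|\bar Z_t|$, inserting the value $f(t,y+\bar Y_t,0)$ and applying (Z) for $f$ to each resulting increment gives
\[
|\tilde f(t,y,z)-\tilde f(t,y,0)|\le\gamma(g_t+|y+\bar Y_t|+|z+\bar Z_t|)^\kappa+\gamma(g_t+|y+\bar Y_t|+|\bar Z_t|)^\kappa\le2\gamma(B+w)^\kappa .
\]
Now set $\hat\kappa:=\sqrt{\kappa q}\in(\kappa,1)$ and $\hat g_t:=g_t+3+|\bar Y_t|^{\kappa/\hat\kappa}+|\bar Z_t|^{\kappa/\hat\kappa}$. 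Since $\kappa/\hat\kappa=\sqrt{\kappa/q}\in(0,1)$, subadditivity of $x\mapsto x^{\kappa/\hat\kappa}$ together with the elementary bounds $g_t^{\kappa/\hat\kappa}\le 1+g_t$ and $w^{\kappa/\hat\kappa}\le 1+w$ yields $(B+w)^{\kappa/\hat\kappa}\le B^{\kappa/\hat\kappa}+w^{\kappa/\hat\kappa}\le(\hat g_t-2)+(1+w)\le\hat g_t+w$, i.e. $(B+w)^\kappa\le(\hat g_t+w)^{\hat\kappa}$; the constant $3$ in $\hat g_t$ is calibrated precisely to absorb the two ``$+1$'' terms, so the combined constant stays at $2\gamma$. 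Hence $\tilde f$ obeys (Z) with parameters $(2\gamma,\hat g,\hat\kappa)$. It remains to confirm $\hat g\in L^1_{\mathbb F}$: $g\in L^1_{\mathbb F}$ by assumption, while $\kappa/\hat\kappa\in(0,1)$ makes $\mathbb E\int_0^T|\bar Y_t|^{\kappa/\hat\kappa}\,dt<\infty$ (as $\bar Y$ of class (D) lies in every $\mathcal S^{q'}$, $q'<1$, by \eqref{eq.dq}) and $\mathbb E\int_0^T|\bar Z_t|^{\kappa/\hat\kappa}\,dt<\infty$ (as $\bar Z\in\mathcal H^s$ for every $s\in(0,1)$, combined with H\"older's inequality on $[0,T]$).

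Finally I would apply Theorem \ref{th.1} to BSDE$^T(\tilde\xi,\tilde f)$, with $\hat\kappa$ playing the role of $\kappa$ and with target exponent $q$. This is legitimate because $q\in(\hat\kappa,1)$ (equivalent to $q>\kappa$), and because the admissibility threshold $a\ge\mu+\lambda^2/\big(1\wedge(q/\hat\kappa-1)\big)$ coincides with the one in \eqref{eq.split75}, since $q/\hat\kappa=\sqrt{q/\kappa}$. Specializing \eqref{eq.16.01.1} to $(\tilde\xi,\tilde f)$ then gives \eqref{eq.split75}: the weight $\|e_{-a^-/\kappa}g\|_{L^1_{\mathbb F}}$ becomes $\|e_{-a^-/\hat\kappa}\hat g\|_{L^1_{\mathbb F}}$, the constant $e^{a^+T}\gamma$ becomes $2e^{a^+T}\gamma$, the function $\psi_1$ becomes $\psi_3(x)=x+x^{\hat\kappa^2(1-q)}$, and the terminal/driver data reduce to $\|\xi-\bar\xi\|_{L^1}+\||f-\bar f|(\cdot,\bar Y,\bar Z)\|_{L^1_{\mathbb F}}$ through $\tilde f(\cdot,0,0)=(f-\bar f)(\cdot,\bar Y,\bar Z)$. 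Since $|\tilde f(r,\tilde Y_r,\tilde Z_r)|=|f(r,Y_r,Z_r)-\bar f(r,\bar Y_r,\bar Z_r)|$, part (ii) follows in the same way from \eqref{eq.16.01.1ff}, with $\psi_2$ turning into $\psi_4(x)=x+x^{\hat\kappa^3(1-q)^2}$. The constant $c_{\hat\kappa,q}$ depends only on $\hat\kappa,q$, hence only on $\kappa,q$, as required.
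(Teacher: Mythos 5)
Your reduction to the difference equation $(\tilde Y,\tilde Z)=(Y-\bar Y,Z-\bar Z)$ and your verification of (Z) for the shifted generator with parameters $(2\gamma,\hat g,\hat\kappa)$, $\hat\kappa=\sqrt{\kappa q}$, coincide with the paper's argument, down to the role of the constant $3$ in $\hat g$ and the arithmetic $q/\hat\kappa=\sqrt{q/\kappa}$. However, the sentence ``condition (H4) is a routine consequence of (H4) and (Z) for $f$'' hides a genuine gap, and it is exactly the point where the paper has to do extra work. For $k\ge 2$ (the multidimensional case that is the paper's focus), (H4) for $\tilde f$ demands $\mathbb E\int_0^T\sup_{|y|\le M}|\tilde f(r,y,0)|\,dr<\infty$, where $\tilde f(t,y,0)=f(t,y+\bar Y_t,\bar Z_t)-\bar f(t,\bar Y_t,\bar Z_t)$. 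Two obstructions: first, the shift $\bar Y_t$ is unbounded, so $\sup_{|y|\le M}|f(t,y+\bar Y_t,0)|$ is not dominated by $\sup_{|y|\le M'}|f(t,y,0)|$ for any fixed $M'$, and (H4) for $f$ gives nothing; second, the definition of a solution only yields $\int_0^T|\bar f(r,\bar Y_r,\bar Z_r)|\,dr<\infty$ almost surely, \emph{not} in expectation (for $k=1$ that expectation is a conclusion of part (ii), not a hypothesis). So Theorem \ref{th.1} cannot be applied directly to BSDE$^T(\tilde\xi,\tilde f)$ as you propose.

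The paper resolves this by a localization that your proof is missing: it sets $\sigma_n:=\inf\{t:\int_0^t|\bar f(r,\bar Y_r,\bar Z_r)|\,dr\ge n\}\wedge T$ and $\alpha_n:=\inf\{t:|\bar Y_t-\bar Y_0|\ge n\}\wedge T$, so that on $[0,\tau_n]$ with $\tau_n:=\sigma_n\wedge\alpha_n$ both obstructions disappear and the difference generator does satisfy (H4) relative to the terminal time $\tau_n$. It then applies your argument (the paper's Step 1) to BSDE$^{\tau_n}(Y_{\tau_n}-\bar Y_{\tau_n},F)$ and lets $n\to\infty$, using that $Y,\bar Y$ are of class (D) and $P(\exists_{n\ge1}\,\tau_n=T)=1$ to pass from $\mathbb E|Y_{\tau_n}-\bar Y_{\tau_n}|$ to $\mathbb E|\xi-\bar\xi|$ and to recover the stated inequalities on $[0,T]$. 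A secondary unjustified step: you invoke $\bar Z\in\mathcal H^s_{\mathbb F}(0,T)$, $s\in(0,1)$, to conclude $\hat g\in L^1_{\mathbb F}(0,T)$, but the hypotheses (H1)--(H5), (Z) are imposed on $(\xi,f)$, not on $(\bar\xi,\bar f)$, so this is not automatic; the paper derives it by first getting $Z\in\mathcal H^s_{\mathbb F}(0,T)$ from Theorem \ref{12sierpnia1} and then $Z-\bar Z\in\mathcal H^s_{\mathbb F}(0,T)$ from the class (D) property of $Y-\bar Y$ together with \cite[Remark 2.1]{KRzS2} and \cite[Lemma 3.1]{bdh}, whence $\bar Z\in\mathcal H^s_{\mathbb F}(0,T)$ by difference. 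Both repairs are needed to make your proposal complete.
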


\begin{proof}
Set $\mathcal C:=\mathcal C\big(c_{\kappa,q},\|e_{- \frac{a^-}{\hat\kappa}}\hat g\|_{L^1_{\mathbb F}},2e^{a^+T}\gamma\big)$.
Let $(Y,Z), (\bar Y,\bar Z)$ be as in the assertion of the theorem. 
By Theorem \ref{12sierpnia1} $Z\in \mathcal H_{\mathbb F}^s(0,T),\, s\in (0,T)$.
Observe  that
\[
Y_t-\bar{Y}_t=\xi-\bar\xi+\int^T_0 F(r,Y_r-\bar{Y}_r,Z_r-\bar{Z}_r)\,dr-\int^T_t (Z_r-\bar{Z}_r)\,dB_r,\quad t\in[0,T],
\]
where $F(t,y,z)=f(t,y+\bar{Y}_t,z+\bar{Z}_t)-\bar f(t,\bar{Y}_t,\bar{Z}_t)$. In other words $(Y-\bar Y,Z-\bar Z)$
is a solution to BSDE$^T(\xi-\bar \xi,F)$.
By the assumptions made on $\bar f$, process $\bar f(\cdot,\bar Y,\bar Z)$ is $\mathbb F$-progressively measurable.
Obviously, $F$ satisfies (H1),(H2), with the same constants, and (H3),(H5).  
By  Theorem \ref{12sierpnia1} $Z-\bar Z\in\mathcal H_{\mathbb F}^s(0,T),\, s\in (0,1)$.
As a result, $\bar Z\in\mathcal H_{\mathbb F}^s(0,T),\, s\in (0,1)$. Now, by (Z), we have 
\begin{equation*}
|F(t,y,z)-F(t,y,0)|=|f(t,y+\bar{Y}_t,z+\bar{Z}_t)- f(t,y+\bar{Y}_t,\bar{Z}_t)|\le 2\gamma (g_t+|\bar{Y}_t|+|\bar{Z}_t|+|y|+|z|)^{\kappa}.
\end{equation*}
Let us take $\beta\in(\kappa,q)$. Then
\begin{equation*}
\begin{split}
&(g_t+|\bar{Y}_t|+|\bar{Z}_t|+|y|+|z|)^{\kappa}=\big((g_t+|\bar{Y}_t|+|\bar{Z}_t|+|y|+|z|)^{\frac{\kappa}{\beta}}\big)^{\beta}\\
&\le \big ((g_t\vee1)+|\bar{Y}_t|^{\frac{\kappa}{\beta}}+|\bar{Z}_t|^{\frac{\kappa}{\beta}}+(|y|\vee1)
+(|z|\vee 1)\big)^{\beta}
\le (g_t+3+|\bar{Y}_t|^{\frac{\kappa}{\beta}}+|\bar{Z}_t|^{\frac{\kappa}{\beta}}+|y|+|z|)^{\beta}.
\end{split}
\end{equation*}
Let us define $\hat{g}_t:=g_t+3+|\bar{Y}_t|^{\frac{\kappa}{\beta}}+|\bar{Z}_t|^{\frac{\kappa}{\beta}}$. 
By the fact that $\bar{Y}$ is of class (D) and  $\bar{Z}\in\mathcal{H}^s_{\mathbb{F}}(0,T)$, $s\in(0,1)$, 
we have    $\hat{g}\in L^1_{\mathbb{F}}(0,T)$. Thus, $F$ satisfies (Z), with $g$ replaced by 
$\hat{g}$, $\gamma$ replaced by $2\gamma$ and $\kappa$ replaced by $\beta$.

{\bf Step 1.} Suppose that $F$ satisfies (H4).
Letting $\beta=\sqrt{\kappa q}$ and applying  Theorem \ref{12sierpnia1} give the desired inequalities.
{\bf Step 2.} The generale case.
By the very definition of a solution to BSDE$^T(\bar\xi,\bar f)$,
\begin{equation}\label{13lipca1}
\int^T_0|\bar f(r,\bar{Y}_r,\bar{Z}_r)|\,dr<\infty.
\end{equation}
Thus, the sequence
\[
\sigma_n:=\inf\{t>0: \int^t_0|\bar f(r,\bar{Y}_r,\bar{Z}_r)|\,dr\ge n\}\wedge T,\quad n\ge 1,
\]
satisfies $P(\exists_{n\ge 1}\,\, \sigma_n=T)=1$. Let $\alpha_n:=\inf\{t>0: |\bar Y_t-\bar Y_0|\ge n\}\wedge T$.
Since $\bar Y$ is c\`adl\`ag, we also have $P(\exists_{n\ge 1}\,\, \alpha_n=T)=1$. Let $\tau_n:=\sigma_n\wedge \alpha_n$,
and $a_0:=\mathbb E|\bar Y_0|$. Then, by (Z),
\begin{align*}
&\int^{\tau_n}_0 \sup_{|y|\le M}|F(t,y,0)|\,dt
\\&
\le
\int^{\tau_n}_0 \sup_{|y|\le M}\Big(|f(t,y+\bar Y_y,\bar Z_t)-f(t,y+\bar Y_t,0)|+|f(t,y+\bar Y_t,0)|+|\bar f(t,\bar Y_t,\bar Z_t)|\Big)\,dt
\\&
\le
\gamma \int_0^T(g_t+|\bar Y_t|+|\bar Z_t|+M)^\kappa\,dt+\int_0^T\sup_{|y|\le M+n+a_0}|f(t,y,0)|\,dt+n.
\end{align*}
In consequence, $F$ satisfies  (H4) with $f$ replaced by $F$ and $\tau$ replaced by $\tau_n$
(recall that $\bar Z\in\mathcal H_{\mathbb F}^s(0,T),\, s\in (0,1)$). 
Clearly, $(Y-\bar Y,Z-\bar Z)$ is a solution to BSDE$^{\tau_n}(Y_{\tau_n}-\bar Y_{\tau_n},F)$.
By Step 2,
\begin{equation*}
\begin{split}
\|e_a(Y-\bar Y)\|_{\mathcal D^1(0,\tau_n)}+|e_a(Z-\bar Z)|_{\mathcal H^q(0,\tau_n)}&\le 
\mathcal C\psi_3(\|Y_{\tau_n}-\bar Y_{\tau_n}\|_{L^1}+\||f-\bar f|(\cdot,\bar Y,\bar Z)\|_{L^1_{\mathbb F}}),
\end{split}
\end{equation*}
and 
\begin{equation*}
\begin{split}
\mathbb E\int_0^{\tau_n}e^{ar}|f(r,Y_r,Z_r)-\bar f(r,\bar Y_r,\bar Z_r)|\,dr
&\le 
\mathcal C^2\psi_4(\|Y_{\tau_n}-\bar Y_{\tau_n}\|_{L^1}+\||f-\bar f|(\cdot,\bar Y,\bar Z)\|_{L^1_{\mathbb F}}).
\end{split}
\end{equation*}
By sending  $n\to \infty$ and using the fact that $Y, \bar Y$ are of class (D) and  $P(\exists_{n\ge 1}\,\, \tau_n=T)=1$,
we conclude the result.
\end{proof}

\begin{corollary}
\label{cor.1}
Let $p=1$. Let  $\bar f:\Omega\times [0,T]\times\mathbb R^k\times\mathbb R^{d\times k}\to \mathbb R^k$
and $\bar\xi\in L^1(\mathcal F_T)$.
Assume that $(\xi,f)$ and $(\bar\xi,\bar f)$
satisfy  \textnormal{(H1)-(H5), (Z)}. 
Let $(Y,Z), (\bar Y,\bar Z)$ be solutions 
to \textnormal{BSDE}$^T(\xi,f)$, \textnormal{BSDE}$^T(\bar\xi,\bar f)$, respectively, 
such that $Y, \bar Y$ are of class \textnormal{(D)}. 
Suppose that $\mathbb E\int_0^T|f(r,\bar Y_r,\bar Z_r)-\bar f(r,\bar Y_r,\bar Z_r)|\,dr<\infty$. Let
\[
K_a:=e^{aT}\mathbb E|\bar\xi|+\mathbb E\int^T_0 e^{ar}|\bar f(r,0,0)|\,dr,\quad \delta f:=f-\bar f,\quad \delta\xi:=\xi-\bar \xi,
\]
and
\begin{align*}
L_a(x)&:=\mathcal C\big(x,\|e_{-\frac{a^-}{\kappa}} g\|_{L^1_{\mathbb F}},e^{a^+T}\gamma\big)\psi_1(K_a),\quad x\ge 0,
\end{align*}
with  $\psi_1$ is as in \textnormal{Theorem \ref{th.1}}(i).
\begin{enumerate}
\item[(i)]
For any $q\in (\kappa,1)$
there exists $c_{\kappa,q}>0$ - depending only on $\kappa,q$ - such that for any $a\ge \mu+\frac{\lambda^2}{1\wedge(\sqrt{\frac q\kappa}-1)}$,
\begin{equation}
\label{eq.split75cor}
\begin{split}
\|e_a(Y-\bar Y)\|_{\mathcal D^1}&+|e_a(Z-\bar Z)|_{\mathcal H^q}
\le T_1L^2_a(c_{\kappa,q})\psi_3(\|\delta\xi\|_{L^1}+\||\delta f|(\cdot,\bar Y,\bar Z)\|_{L^1_{\mathbb F}}),
\end{split}
\end{equation}
where $\hat\kappa:=\sqrt{\kappa q}$, and $\psi_3(x)=x+x^{\hat \kappa^2(1-q)},\, x\ge 0$.
\item[(ii)] Let $q\in (\kappa,1)$ and $k=1$. Then for  any  $a\ge \mu+\frac{\lambda^2}{1\wedge(\sqrt{\frac q\kappa}-1)}$,
\begin{equation}
\label{eq.split75tdcor}
\begin{split}
\mathbb E\int_0^Te^{ar}|f(r,Y_r,Z_r)-\bar f(r,\bar Y_r,\bar Z_r)|\,dr&
\le T_1L^3_a(c_{\kappa,q})
\psi_4(\|\delta \xi\|_{L^1}+\||\delta f|(\cdot,\bar Y,\bar Z)\|_{L^1_{\mathbb F}}),
\end{split}
\end{equation}
with $c_{\kappa,q}, \hat\kappa$ as in (i), and $\psi_4(x):=x+x^{\hat\kappa^3 (1-q)^2} $.
\end{enumerate}
\end{corollary}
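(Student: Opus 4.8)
The corollary is a quantitative sharpening of Theorem \ref{th.2}: the estimates \eqref{eq.split75}, \eqref{eq.split75td} already carry the correct left-hand sides and the correct difference-data functionals $\psi_3,\psi_4$, so the only thing left to do is to replace the \emph{solution-dependent} constant $\mathcal C(c_{\kappa,q},\|e_{-a^-/\hat\kappa}\hat g\|_{L^1_{\mathbb F}},2e^{a^+T}\gamma)$ — which involves the norm of $\hat g_t=g_t+3+|\bar Y_t|^{\kappa/\hat\kappa}+|\bar Z_t|^{\kappa/\hat\kappa}$, hence the unknown $(\bar Y,\bar Z)$ — by a constant built only from the data $(\bar\xi,\bar f)$, namely a power of $L_a(c_{\kappa,q})$. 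Thus the plan is: (1) invoke Theorem \ref{th.2}; (2) bound $\|e_{-a^-/\hat\kappa}\hat g\|_{L^1_{\mathbb F}}$ by $L_a$ using the a priori estimate of Theorem \ref{th.1}(i) applied to $(\bar Y,\bar Z)$; (3) feed this bound back into $\mathcal C$. Throughout I would first use Remark \ref{rem.chv} to normalise $a=0$, so that all exponential weights collapse to $1$ and $K_a,L_a$ become $K_0,L_0$; the weights are reinstated at the very end by the same change of variables.

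The core is the estimate of $\hat g$. After normalisation $\|\hat g\|_{L^1_{\mathbb F}}=\|g\|_{L^1_{\mathbb F}}+3T+\mathbb E\int_0^T|\bar Y_r|^{q'}\,dr+\mathbb E\int_0^T|\bar Z_r|^{q'}\,dr$, where $q':=\kappa/\hat\kappa=\sqrt{\kappa/q}$. The decisive move is to apply Theorem \ref{th.1}(i) to the solution $(\bar Y,\bar Z)$ of BSDE$^T(\bar\xi,\bar f)$ precisely at the exponent $q'$: one checks $q'\in(\kappa,1)$, and that the corollary's threshold $a\ge\mu+\lambda^2/(1\wedge(\sqrt{q/\kappa}-1))$ implies the threshold $a\ge\mu+\lambda^2/(1\wedge(q'/\kappa-1))$ needed for that application — this reduces to the elementary inequality $\sqrt{q/\kappa}\le q'/\kappa=1/\hat\kappa$, true because $q\le 1$. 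Theorem \ref{th.1}(i) then bounds $\|\bar Y\|_{\mathcal D^1}$ and $|\bar Z|_{\mathcal H^{q'}}$ by $\mathcal C(c_{\kappa,q'},\|g\|_{L^1_{\mathbb F}},\gamma)\psi_1(K_0)$, which after relabelling the constant and reconciling the exponent of $\psi_1$ through \eqref{eq.el1} is absorbed into $L_0$. The two integrals are then controlled by $\mathbb E\int_0^T|\bar Y_r|^{q'}\,dr\le T\,\mathbb E\sup_{r\le T}|\bar Y_r|^{q'}\le\frac{T}{1-q'}\|\bar Y\|_{\mathcal D^1}^{q'}$ (the estimate \eqref{eq.dq} of \cite[Remark 2.1]{KRzS2}) and $\mathbb E\int_0^T|\bar Z_r|^{q'}\,dr\le T^{(2-q')/2}|\bar Z|_{\mathcal H^{q'}}$ (Hölder), giving $\|\hat g\|_{L^1_{\mathbb F}}\le\|g\|_{L^1_{\mathbb F}}+3T+c(L_0^{q'}+L_0)$.

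Finally I would substitute this into $\mathcal C(c_{\kappa,q},\cdot,2\gamma)$, which is affine — hence, by \eqref{eq.sub2}, subadditive — in its middle argument. This turns $\mathcal C(c_{\kappa,q},\|\hat g\|_{L^1_{\mathbb F}},2\gamma)$ into a sum of $\mathcal C$-terms, one identifiable with the baseline factor $\mathcal C(c_{\kappa,q},\|g\|_{L^1_{\mathbb F}},\gamma)$ sitting inside $L_0$ and the others carrying the powers $L_0^{q'},L_0$; repeatedly applying \eqref{eq.el1} to merge the exponents, the whole factor is dominated by $T_1L_0^2(c_{\kappa,q})$, which yields \eqref{eq.split75cor}. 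Part (ii) is identical except that Theorem \ref{th.2}(ii) supplies the constant with an extra square $\mathcal C^2$, producing one further power of $L_0$ and hence the prefactor $T_1L_0^3(c_{\kappa,q})$ in \eqref{eq.split75tdcor}; the functionals $\psi_3,\psi_4$ are carried over verbatim. Undoing the normalisation reinstates $e_a,e_{-a^-/\kappa},e^{a^+T}$ exactly as written.

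The step I expect to be the main obstacle is the exponent bookkeeping of the last paragraph: one must track how the sublinear powers $q'=\sqrt{\kappa/q}$ and $\kappa^2(1-q')$ coming from the $\mathcal D^1$- and $\mathcal H^{q'}$-controls combine, under \eqref{eq.el1} and \eqref{eq.sub2}, into precisely the advertised prefactors $T_1L_0^2$, $T_1L_0^3$ and the exponents $\hat\kappa^2(1-q)$, $\hat\kappa^3(1-q)^2$ of $\psi_3,\psi_4$. The one genuinely non-mechanical point to isolate is that $q'=\sqrt{\kappa/q}$ need \emph{not} be $\le q$, so it is the $\mathcal H^{q'}$-control of $\bar Z$ (rather than the $\mathcal H^q$-control) that is actually required; choosing the application exponent of Theorem \ref{th.1}(i) to be exactly $q'$ is what simultaneously makes the $\bar Z$-integral converge and the $a$-threshold match the corollary's. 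The remaining manipulations are routine.
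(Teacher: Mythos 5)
Your proposal is correct and follows essentially the same route as the paper: invoke Theorem \ref{th.2}, control $\|e_{-\frac{a^-}{\hat\kappa}}\hat g\|_{L^1_{\mathbb F}}$ by applying Theorem \ref{th.1} to $(\bar Y,\bar Z)$ at the exponent $\sqrt{\kappa/q}$ (this is exactly the paper's $|e_a\bar Z|_{\mathcal H^{\sqrt{\kappa/q}}}$ term, with the same threshold check $\sqrt{q/\kappa}\le 1/\hat\kappa$ that you isolate as the key point), and then feed the resulting bound back into $\mathcal C$ via the subadditivity \eqref{eq.sub2} and \eqref{eq.el1}. The only differences are cosmetic: the paper keeps the exponential weights throughout rather than normalising $a=0$ by Remark \ref{rem.chv}, and it bounds the $\bar Y$-contribution to $\hat g$ pointwise (yielding $T+T\|e_a\bar Y\|_{\mathcal D^1}$) instead of through \eqref{eq.dq}.
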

\begin{proof}
Clearly, $a\ge  \mu+\frac{\lambda^2}{1\wedge(\frac q\kappa-1)}$. By using Theorem \ref{th.1}, we have 
\begin{equation*}
\begin{split}
\|e_{- \frac{a^-}{\hat\kappa}}(g+3+|\bar{Y}|^{\frac{\kappa}{\hat\kappa}}&+|\bar{Z}|^{\frac{\kappa}{\hat\kappa}})\|_{L^1_{\mathbb F}}
\le
\|e_{- \frac{a^-}{\hat\kappa}} g\|_{L^1_{\mathbb F}}+4\|e_{-a^-}\|_{L^1_{\mathbb F}}+T\|e_a \bar Y\|_{\mathcal D^1}+T_1|e_a \bar Z|_{\mathcal H^{\sqrt{\kappa/q}}}
\\&\le
\|e_{- \frac{a^-}{\hat\kappa}} g\|_{L^1_{\mathbb F}}+4(\frac{1}{a^-}\wedge T)
+T_1\mathcal C\big(c_{\kappa,q},\|e_{-\frac{a^-}{\kappa}} g\|_{L^1_{\mathbb F}},e^{a^+T}\gamma\big)\psi_1(K_a)
\\& \le 5T_1\mathcal C\big(c_{\kappa,q},\|e_{-\frac{a^-}{\kappa}} g\|_{L^1_{\mathbb F}},e^{a^+T}\gamma\big)\psi_1(K_a).
\end{split}
\end{equation*}
Therefore, by \eqref{eq.sub2}, 
\begin{equation}
\begin{split}
&\mathcal C\big(c_{\kappa,q},\|e_{- \frac{a^-}{\hat\kappa}}
(g+3+|\bar{Y}|^{\frac{\kappa}{\hat\kappa}}+|\bar{Z}|^{\frac{\kappa}{\hat\kappa}})\|_{L^1_{\mathbb F}},2e^{a^+T}\gamma\big)
\le
T_1\mathcal C^2\big(c_{\kappa,q},\|e_{-\frac{a^-}{\kappa}} g\|_{L^1_{\mathbb F}},e^{a^+T}\gamma\big) \psi_1(K_a).
\end{split}
\end{equation}
From this and  Theorem \ref{th.2} one easily concludes the desired inequalities. 
\end{proof}

\begin{corollary}
\label{cor.2}
Let $p=1$, $\beta$ be a bounded stopping time, $\eta,\bar \eta\in \mathcal F_\beta$
and $f$ satisfy (H1)--(H5), (Z). Then there exists  $C>0$ - depending only on $\lambda,\mu,\esssup \beta, \gamma,\kappa$ -
such that 
\[
\mathbb E\big|\mathbb  E^{f}_{\alpha,\beta}(\eta)-\mathbb  E^{f}_{\alpha,\beta}(\bar \eta)\big|\le 
C(1+\|g\|_{L^1_{\mathbb F}})^2(1+\|\bar \eta\|_{L^1(\mathcal F_\beta)}+\|f(\cdot,0,0)\|_{L^1_{\mathbb F}(0,\beta)})^2\psi(\|\eta-\bar\eta\|_{L^1(\mathcal F_\beta)})
\]
for any stopping time $\alpha\le\beta$, where $\psi(x)=x+x^{\frac{1}{4} \kappa(1-\kappa^2)},\, x\ge 0$.
\end{corollary}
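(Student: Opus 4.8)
The plan is to deduce the estimate from Corollary~\ref{cor.1} in the degenerate case $\bar f=f$, where the two equations share a generator and differ only in their terminal data. First I would remove the random terminal time. Put $T:=\esssup\beta$ and $\tilde f:=\mathbf 1_{[0,\beta]}f$. By the Remark following the definition of a solution, the class (D) solution of $\mathrm{BSDE}^\beta(\eta,f)$ coincides up to time $\beta$ with the class (D) solution $(Y,Z)$ of $\mathrm{BSDE}^T(\eta,\tilde f)$; in particular $\mathbb E^f_{\alpha,\beta}(\eta)=Y_\alpha$ for every stopping time $\alpha\le\beta\le T$, and likewise $\mathbb E^f_{\alpha,\beta}(\bar\eta)=\bar Y_\alpha$ for the solution $(\bar Y,\bar Z)$ of $\mathrm{BSDE}^T(\bar\eta,\tilde f)$. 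One checks directly that $\tilde f$ again satisfies (H1)--(H5),(Z), with the same $\lambda,\gamma,\kappa$ and with $g$ replaced by $\mathbf 1_{[0,\beta]}g$ (so that $\|g\|_{L^1_{\mathbb F}}$ is unchanged), the only modification being that its monotonicity constant becomes $\mu\vee0$. Since both equations carry the \emph{same} generator $\tilde f$, the discrepancy $\delta f$ vanishes and the term $\||\delta f|(\cdot,\bar Y,\bar Z)\|_{L^1_{\mathbb F}}$ in Corollary~\ref{cor.1} disappears, leaving only $\delta\xi=\eta-\bar\eta$.

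Next I would fix the free parameters so that Corollary~\ref{cor.1}(i) reproduces the advertised function $\psi$. Choosing $q:=\tfrac{1+\kappa}{2}\in(\kappa,1)$ gives $\hat\kappa^2(1-q)=\kappa q(1-q)=\tfrac14\kappa(1-\kappa^2)$, hence $\psi_3=\psi$; this is valid for every $\kappa\in[0,1)$, and the constant $c_{\kappa,q}$ then depends on $\kappa$ alone. I then take $a:=(\mu\vee0)+\frac{\lambda^2}{1\wedge(\sqrt{q/\kappa}-1)}\ge0$, the least admissible value, which depends only on $\mu,\lambda,\kappa$. The sign $a\ge0$ is decisive: it forces $a^-=0$, so that $\|e_{-a^-/\kappa}g\|_{L^1_{\mathbb F}}=\|g\|_{L^1_{\mathbb F}}$ and $e^{a^+T}=e^{aT}$ inside $L_a$, and it gives $e^{a\sigma}\ge1$, whence
\[
\mathbb E\big|\mathbb E^f_{\alpha,\beta}(\eta)-\mathbb E^f_{\alpha,\beta}(\bar\eta)\big|
=\mathbb E|Y_\alpha-\bar Y_\alpha|\le\mathbb E\big(e^{a\alpha}|Y_\alpha-\bar Y_\alpha|\big)\le\|e_a(Y-\bar Y)\|_{\mathcal D^1}.
\]

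Finally I would apply Corollary~\ref{cor.1}(i), which bounds the last quantity by $T_1 L_a^2(c_{\kappa,q})\psi(\|\eta-\bar\eta\|_{L^1})$ (with $T_1=1\vee T$), and then rescale constants. From $\mathcal C(x,y,z)=2(1+y)x[(z+\lambda)\vee1]T_1^3$ I factor $\mathcal C(c_{\kappa,q},\|g\|_{L^1_{\mathbb F}},e^{aT}\gamma)\le C_1(1+\|g\|_{L^1_{\mathbb F}})$, where $C_1$ collects $c_{\kappa,q}$, $(e^{aT}\gamma+\lambda)\vee1$ and $T_1^3$. Writing $K:=\|\bar\eta\|_{L^1(\mathcal F_\beta)}+\|f(\cdot,0,0)\|_{L^1_{\mathbb F}(0,\beta)}$, one has $K_a\le e^{aT}K$ and $\psi_1(K_a)=K_a+K_a^{\kappa^2(1-q)}\le2(1+K_a)\le2e^{aT}(1+K)$, using $K_a^{\kappa^2(1-q)}\le1+K_a$. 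Squaring gives $T_1L_a^2(c_{\kappa,q})\le 4T_1e^{2aT}C_1^2(1+\|g\|_{L^1_{\mathbb F}})^2(1+K)^2$, and absorbing every factor that depends only on $\lambda,\mu,\esssup\beta,\gamma,\kappa$ into a single constant $C$ yields exactly the claimed inequality.

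The computations are routine; the two points requiring care are the calibration $q=\tfrac{1+\kappa}{2}$, which is the uniform-in-$\kappa$ choice hitting the exponent $\tfrac14\kappa(1-\kappa^2)$, and keeping $a\ge0$ throughout, which is precisely what lets the $\mathcal D^1$-norm control $\mathbb E|Y_\alpha-\bar Y_\alpha|$ while simultaneously eliminating the weights $e_{-a^-/\kappa}$ from the bound. The remaining rescaling of constants is legitimate because $a$, $T=\esssup\beta$ and $c_{\kappa,q}$ all depend only on the admissible parameters.
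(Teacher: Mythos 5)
Your proof is correct and follows essentially the same route as the paper, whose entire argument is the one-line instruction ``apply Corollary \ref{cor.1} with $q=\tfrac12(1+\kappa)$''; you have simply made explicit the steps the paper leaves implicit (the reduction to the deterministic horizon $T=\esssup\beta$ via the Remark, taking $\bar f=f$ so $\delta f=0$, the choice of a nonnegative admissible $a$, and the absorption of $L_a^2$ into the constant $C$). The key calibration $\hat\kappa^2(1-q)=\kappa q(1-q)=\tfrac14\kappa(1-\kappa^2)$ matches the paper exactly.
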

\begin{proof}
It follows directly from Corollary \ref{cor.1} applied with $q=\frac12(1+\kappa)$.
\end{proof}

\subsection*{Acknowledgements}
{\small T. Klimsiak is supported by Polish National Science Centre: Grant No. 2017/25/B/ST1/00878. M. Rzymowski acknowledges the support of the Polish National Science Centre: Grant No. 2018/31/N/ST1/00417.}

\end{document}